\newcommand{\geqs}{\geqslant}
\newcommand{\leqs}{\leqslant}
\newcommand{\Nb}{{\mathbb N}}
\newcommand{\Rb}{{\mathbb R}}
\newcommand{\Zb}{{\mathbb Z}}
\newcommand{\sgn}{\operatorname{sgn}} %the sign function
\renewcommand{\a}{\alpha}
\renewcommand{\b}{\beta}
\renewcommand{\k}{\kappa}
\renewcommand{\t}{\tau}
\providecommand{\abs}[1]{\lvert#1\rvert} %absolute value \abs{x}
\providecommand{\norm}[1]{\lVert#1\rVert} %the norm \norm{x}
\newenvironment{Proof}{\removelastskip \vskip12pt plus 1pt \noindent
{\em Proof.\/}\rm }{\hfill$\square$ \vskip12pt plus 1pt}
\newtheorem{theorem}{Theorem}[section]
\newtheorem{lemma}[theorem]{Lemma}
\newtheorem{prop}[theorem]{Proposition}
\newtheorem{definition}[theorem]{Definition}
\newtheorem{coro}[theorem]{Corollary}
\newtheorem{remark}[theorem]{Remark}
\theoremstyle{definition}
\theoremstyle{remark}
\numberwithin{equation}{section}
\begin{document}

\title{the Redner--Ben-Avraham--Kahng cluster system}

%    Information for first author
\author{F.P. da Costa}
%    Address of record for the research reported here
\address{Departmento de Ci\^encias e Tecnologia, Universidade Aberta, Lisboa, Portugal, and
Centro de An\'alise Matem\'atica, Geometria e Sistemas Din\^amicos, Instituto Superior T\'ecnico,
Universidade T\'ecnica de Lisboa, Lisboa, Portugal}
%    Current address
%\curraddr{Department of Mathematics and Statistics,
%Case Western Reserve University, Cleveland, Ohio 43403}
\email{fcosta@uab.pt}
%    \thanks will become a 1st page footnote.
\thanks{Dedicated to Carlos Rocha and Lu\'is Magalh\~aes on the occasion of their sixtieth birthdays.}

%    Information for second author

\author{J.T. Pinto}
\address{Departmento de Matem\'atica and 
Centro de An\'alise Matem\'atica geometria e Sistemas Din\^amicos, 
Instituto Superior T\'ecnico, Universidade T\'ecnica de Lisboa, Lisboa, Portugal}
\email{jpinto@math.ist.utl.pt}

\author{R. Sasportes}
\address{Departmento de Ci\^encias e Tecnologia, Universidade Aberta, Lisboa, Portugal, and
Centro de An\'alise Matem\'atica, Geometria e Sistemas Din\^amicos, Instituto Superior T\'ecnico,
Universidade T\'ecnica de Lisboa, Lisboa, Portugal}
\email{rafael@uab.pt}

%    General info
\subjclass[2000]{Primary 34A12; Secondary 82C05}

\date{February 22, 2013}% and, in revised form, month day, year.}

\keywords{Dynamics of ODEs, Coagulation processes}

%=============================================================================
%                                                                                 Abstract
%=============================================================================

\begin{abstract}
We consider a coagulation model first introduced by Redner, Ben-Avraham and 
Krapivsky in \cite{RBK}, the main feature of which is that the reaction between a $j$-cluster 
and a $k$-cluster results in the creation of a 
$|j-k|$-cluster, and
not, as in Smoluchowski's model, of a $(j+k)$-cluster. 
In this paper we prove existence and uniqueness of solutions under 
reasonably general conditions on the coagulation coefficients, and we also establish 
differenciability properties and continuous dependence of solutions. Some interesting 
invariance properties are also proved. Finally, we study the long-time behaviour of 
solutions, and also present a preliminary analysis of their scaling behaviour.

\end{abstract}

\maketitle

%==============================================================================
%                                                                                Introduction
%==============================================================================

\section{Introduction}

Among the diverse mathematical approaches to modelling the kinetics of cluster growth, one that has received a good deal
of attention consists in the mean field models of coagulation-fragmentation type \cite{LM2004} of which Smoluchowski's
coagulation system is a prototypical case. The basic dynamic process modelled by Smoluchowski's coagulation is the 
binary reaction between
a $j$-cluster (a cluster made up of $j$ identical particles) and a $k$-cluster to produce a $(j+k)$-cluster. So, 
the mean cluster size 
in these coagulating systems tend to increase with time. A contrasting case to coagulation is fragmentation 
in which the basic dynamic
process is the disintegration of a given $j$-cluster into two or more clusters of smaller size. In these 
fragmentation systems mean cluster size 
decreases with time.

A coagulation system that, in spite of its basic mechanism being binary cluster reactions, has 
cluster size evolution similar to that of a fragmentation system is the cluster eating equation. 
This model was introduced by Redner, Ben-Avraham and Kahng in \cite{RBK} (see also \cite{war}) but has received scant attention since. 
We shall call it the Redner--Ben-Avraham--Kahng system (RBK for short).

The basic process is the following: when a $j$-cluster reacts with a $k$-cluster, the result is the production of a
$|j-k|$-cluster (see Fig.~\ref{cluster_eating}).

%%%%%%%%%%%%%%%%%%%%%%%%%%%%%%%%%%%%%%%%%%%%%%%%%%%%%%%%%%%%%%%%%%%%%%%%%%%%%%%%%%%%
%
%
\begin{figure}[h]
\begin{center}
\psfrag{j}{$j$-cluster}
\psfrag{k}{$k$-cluster}
\psfrag{j-k}{$|j-k|$-cluster}
\psfrag{+}{$+$}
\includegraphics[scale=.5]{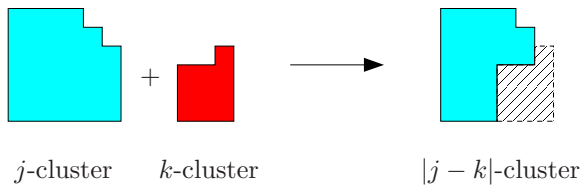}
\end{center}
\caption{Schematic reaction in the cluster eating RBK model}\label{cluster_eating}
\end{figure}
%
%
%%%%%%%%%%%%%%%%%%%%%%%%%%%%%%%%%%%%%%%%%%%%%%%%%%%%%%%%%%%%%%%%%%%%%%%%%%%%%%%%%%%%

This process is reminiscent of the coagulation-annihilation model with partial annihilation in
which two or more species of clusters, $A$ and $B$ say, are present, and if a cluster $A_j$ reacts with a cluster $B_k$
the resulting cluster has size $|j-k|$ and is an $A$ cluster if $j>k$, is a $B$ cluster if $j<k$, and is an inert cluster,
neither $A$ or $B$, if $j=k$. Although many problems remain open concerning
coagulation systems with Smoluchowski's type dynamics and with either
complete or incomplete annihilation, there is already a relatively large literature about those systems 
(see, for instance, \cite{CPRS,K,ZY} and references therein). Having just one type of clusters, 
the RBK model could result in a  somewhat easier system to handle mathematically and it is somewhat surprising that, 
to the best of our
knowledge, it has not attracted further attention.

Having in mind the process illustrated in Figure~\ref{cluster_eating}, the index $j$ can no longer represent the total amount of
particles in a cluster (which is a quantity that should be conserved in each elementary reaction) but represents only
the number of  particles in a cluster that are, in some sense, {\it active\/} (a concept whose physical meaning we must leave
undefined.) So, in this paper, every time we refer to a $j$-cluster, we mean a cluster made up with a number $j$ of active 
particles.

Assuming the mass action law of chemical kinetics, the rate of change $\frac{dc_j}{dt}$ of the concentration 
of $j$-clusters at time $t$, $c_j(t)$, has contributions of two 
  different types. It decreases due to reactions of the type $(j)+(k)\rightarrow (j+k)$, 
  for $k\in\Nb$, which corresponds to
$$a_{j,k} c_j(t)c_k(t),$$
where $a_{j,k}$ is the rate coefficient for these equations.
And $\frac{dc_j}{dt}$ increases due to reactions like $(j+k)+(k)\rightarrow (j)$, again with $k\in\Nb,$ 
which have contributions of the type
$$a_{j+k,k}c_{j+k}(t)c_k(t).$$

Adding all these contributions we obtain the Redner--Ben-Avraham--Kahng coagulation system
\begin{equation}
 \frac{dc_j}{dt} = \sum_{k=1}^{\infty}a_{j+k,k}c_{j+k}c_k - \sum_{k=1}^{\infty}a_{j,k}c_jc_k, \qquad j\in\Nb. \label{rbk}
\end{equation}
To simplify notation, we shall often write $W_{p,q}$ instead of $a_{p,q}c_pc_q.$ Naturally, we shall always assume that 
the rate coefficients are symmetric and nonnegative:
\[
 a_{j,k} = a_{k,j},\quad a_{j,k}\geqs 0,\;\forall j, k\in\Nb.
\]

In this paper we study the existence and uniqueness of solutions (\ref{rbk}) in appropriate sequence spaces, 
we investigate some invariance properties of solutions, and also start the study of
their long-time  and scaling behaviours.

%============================================================================
%                                                                        Preliminaries                                                            
%============================================================================

\section{Preliminaries}

The mathematical study of (\ref{rbk}) requires the consideration of appropriate spaces. As is 
usual in works in this area, we will consider the Banach spaces
\[
 X_{\mu} := \left\{x=(x_j)\in\Rb^{\Nb}\,|\; \|x\|_{\mu}:=\sum_{j=1}^{\infty}j^{\mu}|x_j| <\infty\right\},\quad \mu\geqs 0,
\]
and their nonnegative cones $X_{\mu}^+:=\{x\in X_{\mu}\,|\; x_j\geqs 0\}.$ Observe that the norm $\|c\|_0$ 
of a given cluster distribution $c$ measures the total amount of clusters that are present. In the usual
coagulation, or coagulation-fragmentation, equations with Smoluchowski's coagulation, the norm $\|c\|_1$
measures the total density or mass of the cluster distribution $c$. Now, with $j$  measuring only the number of 
active particles in a cluster, this norm measures something like an active density or mass. We shall
omit the word ``active'' in what follows.

Being (\ref{rbk}) an infinite dimensional system, we need some care in defining what we mean by a solution.
In this paper we use a definition of solution analogous to the one in \cite{BC} for the standard 
coagulation-fragmentation:

\begin{definition}\label{defsol}
 Let $T\in (0, +\infty].$ A (mild) solution of the Cauchy problem for
 (\ref{rbk}) on $[0, T)$ with initial condition $c(0) = c_0\in X_1^+$ is a function 
 $c=(c_j): [0, T)\to X_1^+$ such that
 \begin{description}
  \item[{\rm (i)}] each $c_j: [0, T)\to \Rb^+$ is continuous and $\displaystyle{\sup_{t\in [0, T)}\|c(t)\|_1<\infty}.$
  \item[{\rm (ii)}] for all $j\in \Nb,$ and all $t\in [0, T)$, 
  we have $\displaystyle{\int_0^t\sum_{k=1}^{\infty}a_{j,k}c_k(s)ds}<\infty.$
  \item[{\rm (iii)}] for all $j\in \Nb,$ the following holds for each $t\in [0, T)$,
  \[
   c_j(t)=c_j(0) + \int_0^t\biggl[\sum_{k=1}^{\infty}a_{j+k,k}c_{j+k}(s)c_k(s) - \sum_{k=1}^{\infty}a_{j,k}c_j(s)c_k(s)\biggr]ds.
  \]
 \end{description}
\end{definition}

\begin{remark}\label{rem1}
 Assuming that, for some nonnegative
 constant $K$ and all positive integers $j$ and $k$, the rate coefficients satisfy
 $a_{j,k}\leqs Kjk,$ then {\rm (i)}$\Rightarrow${\rm (ii)}.\\
 The definition of solution implies that, if $c$ is a solution on $[0, T)$, then each $c_j$ is
 absolutely continuous, so that equation (\ref{rbk}) is satisfied by $c$ a.e. $t\in [0, T)$.
\end{remark}

Also as in works on the standard coagulation-fragmentation equations, we find it convenient to 
consider finite dimensional systems that approximate the infinite dimensional equation (\ref{rbk}). 
This will be particularly relevant for the existence result.

We will now define the finite dimensional approximation of (\ref{rbk}) to be considered in the paper. 
In fact, and in contradistinction 
with the case of Smoluchovski's equation,
 we will prove that, for initial data with compact 
 support, (\ref{rbk}) reduces to this particular finite dimensional system exactly, and so,  
 for that type of initial data, the finite dimensional truncation  is not an approximation at all
but the exact system.  
In fact, it is exactly this compactly supported cases and the corresponding finite dimensional systems that Redner, 
Ben-Avraham and Kahng considered in \cite{RBK}. 

To motivate the finite dimensional system, consider an initial condition for which $c_j(0)=0$ if $j>N$ for some positive $N$. 
Since
the only process is a coagulating one in which the resulting cluster has a smaller size, no clusters with size bigger than $N$ 
can be created.
Mathematically, this is translated in the finite $N$-dimensional system, for an arbitrarily fixed positive integer $N$:   
\begin{equation}
 \frac{dc_j}{dt} = \sum_{k=1}^{N-j}W_{j+k,k}(c) - \sum_{k=1}^{N}W_{j,k}(c), \qquad j\in\{1, \ldots, N\}, \label{frbk}
\end{equation}
where the first sum is defined to be identically zero when $N=1$ or if $j=N.$

Naturally, since (\ref{frbk}) is a finite dimensional system with a polynomial right hand side (as a function of the components 
$c_j$ of the solution vector
$c=\bigl(c_j(\cdot)\bigr)$\/), the existence of local solutions 
to the Cauchy problems follows immediately from the standard Picard-Lindel\"of existence
theorem. In the following proposition we collect 
basic results
about solutions to this finite dimensional system.

\begin{prop}\label{prop2-1}
Let $c=\bigl(c_j(\cdot)\bigr): I_{\rm max}\to\Rb^{N^+}$ be the unique local solution of (\ref{frbk}) 
with initial condition $c(0)=c_{0}$ and let $I_{\rm max}$ be its
maximal interval. Then
\begin{description}
 \item[{\rm (i)}] For every sequence $(g_j),$ and every $m \in\{1, \ldots, N\}$ the following holds
\begin{equation}
 \sum_{j=m}^Ng_j\dot{c}_j = -\sum_{T_1}(g_j-g_{j-k})W_{j,k} - \sum_{T_2}g_jW_{j,k}, \label{wfrbk}
\end{equation}
where 
\[
\begin{array}{l}
 T_1=T_1(m,N):=\{(j,k)\in\{m,\ldots, N\}\times\{1,\ldots, N\}|\; k\leqs j-m\},  \\
 T_2=T_2(m,N):=\{(j,k)\in\{m,\ldots, N\}\times\{1,\ldots, N\}|\; k\geqs j-m+1\}. 
\end{array}
\]   
 \item[{\rm (ii)}] If all components of the initial condition $c_0$ are nonnegative, then also $c_j (t)\geqs 0,$ 
for all $j\in\{1,\ldots, N\}$ and all $t\in I\cap\Rb^+,$  where $\Rb^+:=[0, +\infty).$ 
\item[{\rm (iii)}] $\sup I_{\rm max} = +\infty.$ 
\end{description}
\end{prop}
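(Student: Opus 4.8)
The plan is to establish the three items in order, since (i) is an algebraic identity that feeds into both (ii) and (iii).

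For part (i), I would start from the equations \eqref{frbk} and compute $\sum_{j=m}^N g_j\dot c_j$ directly. Substituting the right-hand side gives a double sum of gain terms $\sum_{j=m}^N g_j\sum_{k=1}^{N-j}W_{j+k,k}$ and a double sum of loss terms $-\sum_{j=m}^N g_j\sum_{k=1}^N W_{j,k}$. The key manipulation is a change of summation index in the gain term: writing $i=j+k$, the pair $(i,k)$ ranges over those with $i\le N$, $k\ge 1$, and $i-k\ge m$, i.e. exactly $k\le i-m$, which is the set $T_1$ after renaming $i$ back to $j$; the coefficient collected is $g_{j-k}W_{j,k}$. The loss term's index set $\{m,\dots,N\}\times\{1,\dots,N\}$ splits according to whether $k\le j-m$ (that is $T_1$) or $k\ge j-m+1$ (that is $T_2$), contributing $-g_jW_{j,k}$ on each. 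Combining the $T_1$ pieces yields $-\sum_{T_1}(g_j-g_{j-k})W_{j,k}$ and the leftover loss piece is $-\sum_{T_2}g_jW_{j,k}$, which is \eqref{wfrbk}. This is routine bookkeeping; the only thing to be careful about is the edge cases $N=1$ and $j=N$ where the gain sum is empty, but these are consistent with $T_1$ being empty there.

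For part (ii), I would argue that the nonnegative cone is forward invariant by a standard quasipositivity (tangency) argument: whenever $c\in\Rb^{N+}$ has $c_j=0$ for some $j$, the $j$-th component of the vector field is $\sum_{k=1}^{N-j}a_{j+k,k}c_{j+k}c_k\ge 0$, since every term is a product of nonnegative quantities. Hence the vector field points into $\Rb^{N+}$ on each face of the boundary, so a solution starting in $\Rb^{N+}$ cannot leave it for $t\ge 0$. Alternatively one can use the integrating-factor form $c_j(t)=c_j(0)\exp(-\int_0^t\sum_k a_{j,k}c_k\,ds)+\int_0^t(\text{gain})\exp(-\int_s^t\cdots)\,ds$ together with a bootstrap/continuity argument; I would probably just invoke the tangency criterion directly.

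For part (iii), I would use \eqref{wfrbk} with $m=1$ and $g_j\equiv 1$. Then $g_j-g_{j-k}=0$, so the identity collapses to $\frac{d}{dt}\sum_{j=1}^N c_j = -\sum_{T_2} W_{j,k}\le 0$ once we know $c_j\ge 0$ (from (ii)), because every $W_{j,k}=a_{j,k}c_jc_k\ge 0$. Therefore $\sum_{j=1}^N c_j(t)\le \sum_{j=1}^N c_j(0)$ for all $t\in I_{\mathrm{max}}\cap\Rb^+$, and combined with nonnegativity this gives a uniform a priori bound $0\le c_j(t)\le \|c_0\|_{\ell^1}$ on each component. A bounded solution of an ODE with locally Lipschitz (here polynomial) right-hand side cannot blow up in finite forward time, so by the standard continuation theorem $\sup I_{\mathrm{max}}=+\infty$. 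The main obstacle, such as it is, is purely clerical: getting the index-set reindexing in part (i) exactly right so that the sets $T_1,T_2$ come out as stated; once \eqref{wfrbk} is in hand, parts (ii) and (iii) are short.
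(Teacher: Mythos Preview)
Your argument is correct and matches the paper's proof in parts (i) and (iii) essentially line for line: the same reindexing $i=j+k$ for the gain term and the same $T_1/T_2$ split of the loss term in (i), and the same use of \eqref{wfrbk} with a constant (more generally, nondecreasing positive) test sequence to get an a priori bound in (iii).

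The only place you diverge is (ii). The paper does not invoke the tangency/quasipositivity criterion as a black box; instead it carries out the classical $\varepsilon$-perturbation argument by hand: add $\varepsilon>0$ to each equation, observe that at a boundary point $c_r(\tau)=0$ the perturbed derivative $\dot c_r^{\varepsilon}(\tau)=R_r(c(\tau))+\varepsilon>0$, so $c_r^{\varepsilon}>0$ just after $\tau$, and then pass to the limit $\varepsilon\downarrow 0$ via continuous dependence. Your approach is more concise and equally valid (it is exactly Nagumo's invariance theorem for the nonnegative cone), while the paper's version is self-contained and avoids citing an external criterion; both rest on the same structural observation that the gain term $R_j(c)$ is nonnegative when $c\geqs 0$.
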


\begin{proof} The proofs of these results follow the corresponding ones for the usual coagulation-fragmentation equation closely
(see \cite[Lemmas 2.1, 2.2]{BC}.)
\begin{description}
\item[{\rm (i)}] Multiplying equation (\ref{frbk}) by $g_j$ and summing in $j$ from $m$ to $N$ one gets
\[
 \sum_{j=m}^Ng_j\dot{c}_j = \sum_{j=m}^{N-1}\sum_{k=1}^{N-j}g_jW_{j+k,k} - \sum_{j=m}^{N}\sum_{k=1}^{N}g_jW_{j,k},
\]
and now an easy manipulation (a change of notation in the first sum and the separation of the second sum into a sum over $T_1$ 
and another over $T_2$) gives the result (\ref{wfrbk}).
\item[{\rm (ii)}] Write (\ref{frbk}) as $\dot{c}_j = R_j(c)-c_j\varphi_j(c)$ where
\[
 R_j(c):=\sum_{k=1}^{N-j}W_{j+k,k}(c),\qquad \varphi_j(c):=\sum_{k=1}^Na_{j,k}c_k.
\]
Suppose that, for some $\tau\in I\cap\Rb^+$ and all $j=1, \ldots, N$, we have $c_j(\tau)\geqs 0$ and $c_r(\tau)=0$ for some $r$.
For $\varepsilon >0$ consider the initial value problem
\[
 \begin{array}{l}
  \dot{c}_j^{\varepsilon} = R_j(c^{\varepsilon}) - c_j^{\varepsilon}\varphi_j(c^{\varepsilon})+ \varepsilon\\
  c_j^{\varepsilon}(\tau)\rule{0mm}{4mm} = c_j(\tau).
 \end{array}
\]
Thus, $\dot{c}_r^{\varepsilon}(\tau) = R_r(c^{\varepsilon}(\tau)) + \varepsilon > 0$ and, for some $\eta>0$, $c_r^{\varepsilon}(t)>0$, 
for
$t\in (\tau, \tau+\eta),$ for each $\varepsilon\in (0, \varepsilon_0)$, with $\varepsilon_0$ fixed. By continuous dependence, by
making
$\varepsilon \downarrow 0,$ we conclude that $c_r^{\varepsilon}(t)\to c_r(t), t\in [\tau, \tau+\eta),$ and thus 
$c_r(t)\geqs 0,  t\in [\tau, \tau+\eta).$
Hence, we conclude the nonnegativity of each $c_j$ for $t\in I\cap\Rb^+.$
\item[{\rm (iii)}] Using the expression (\ref{wfrbk}) proved in (i) and the nonnegativity of solutions in (ii) we conclude that, 
for every nondecreasing positive sequence $(g_j)$,
we have, for all $m\in\{1, \ldots, N\},$
\[
 \sum_{j=m}^Ng_j\dot{c}_j(t) \leqs 0.
\]
But then $c$ is bounded and, being the right-hand side of (\ref{frbk}) bounded in bounded subsets of $\Rb^{N},$ we conclude 
that $\sup I_{\rm max} = +\infty.$
\end{description}
This concludes the proof.
\end{proof}

It is easy to conclude from the equations (\ref{rbk}), (\ref{frbk}) and the Definition~\ref{defsol}, 
that if $c^N(\cdot)$ is a solution of (\ref{frbk}), then the function 
$c:=(c_1^N, c_2^N, \ldots, c_N^N, 0, 0, \ldots)$ is a solution of (\ref{rbk}).

Also easy to conclude, by choosing $g_j\equiv j$ in Proposition~\ref{prop2-1}-{\rm (i)}, is the fact 
that the density of solutions
to the finite dimensional systems (\ref{frbk}) decreases with time, i.e., for all $t_2\geqs t_1,$ we have
$\|c(t_2)\|_1\leqs \|c(t_1)\|_1.$ Given the type of coagulation
process under consideration (remember Figure~\ref{cluster_eating}), this is a physically reasonable behaviour for 
solutions to the infinite dimensional system (\ref{rbk}). However, it is not presently clear that other types of 
(nonphysical) solution can not, in fact, exist, similarly to what happens in the case of the pure fragmentation equation 
\cite[Example 6.2]{BC}. 

We end this section by introducing the following definition:

\begin{definition}\label{admsolnds}
 Let $c$ be a solution to (\ref{rbk}).
 \begin{description}
  \item[{\rm (i)}] we call $c$ an {\em admissible solution\/} if it can be obtained as the uniform
  limit in compact sets of $[0, \infty)$, as $N\to\infty$, 
  of a sequence of solutions $c^N$ to  (\ref{rbk}) such that $c_j^N\equiv 0, \forall j> N.$ 
  (In particular, $(c^N_1, \ldots, c_N^N)$ can be a solution to (\ref{frbk}).)
  \item[{\rm (ii)}] we call $c$ a {\em density nonincreasing solution\/} if, for all $t_2\geqs t_1,$ it holds 
$\|c(t_2)\|_1\leqs \|c(t_1)\|_1.$
 \end{description}
\end{definition}

\begin{remark}
In the literature of coagulation-fragmentatiom equations an admissible solution is one that can be obtained as the weak limit as $n\to\infty$ 
of a sequence to finite $n$-dimensional truncations of the system \cite{CdC}. In Definition~\ref{admsolnds} we impose the condition of
uniform convergence in compact subsets of time $t$. This corresponds to what we can prove in this case (see Corollary~\ref{admis} below); it is also what
happens to be the case for coagulation-fragmentation equations with coagulation kernels growing at most linearly \cite[Corollary 2.6]{BC}.
\end{remark}

%===================================================================================
%                                                                                     Existence
%===================================================================================

\section{Existence of solutions}

In this section we shall prove existence of solutions in $X^+_1$ of Cauchy problems for (\ref{rbk}) 
with initial data in $X^+_1,$ with some mild conditions on the coagulation coefficients. 

\begin{theorem}\label{theoexist}
 Assume $a_{j,k}\leqs Kjk,$ for some positive constant $K$ and all positive integers $j$ and $k$.
 Let $c_0\in X_1^+.$ Then, there is at least one solution of (\ref{rbk}) with initial condition 
 $c(0)=c_0$, defined on $[0, T)$, for some $T\in (0, +\infty].$
\end{theorem}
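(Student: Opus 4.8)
The plan is to build a solution of (\ref{rbk}) as a limit of solutions $c^N$ of the finite dimensional truncations (\ref{frbk}), exactly as in the classical approach of Ball and Carr. First I would take the natural truncated initial data: set $c_{0,j}^N := c_{0,j}$ for $j \leqs N$ and $c_{0,j}^N = 0$ for $j > N$, so that $c_0^N \in X_1^+$ and $\|c_0^N\|_1 \leqs \|c_0\|_1$. By Proposition~\ref{prop2-1} the corresponding solution $c^N$ of (\ref{frbk}) exists globally on $\Rb^+$, has nonnegative components, and (taking $g_j \equiv j$ in part (i)) satisfies $\|c^N(t)\|_1 \leqs \|c_0^N\|_1 \leqs \|c_0\|_1$ for all $t \geqs 0$. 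Embedding $c^N$ in $\Rb^\Nb$ by appending zeros, we get a sequence bounded in $\sup_t \|\cdot\|_1$, which is the uniform a priori bound we need; $T = +\infty$ will in fact be attainable.

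The next step is a compactness argument to extract a convergent subsequence. For each fixed $j$, the derivative $\dot c_j^N$ is, using $a_{j,k} \leqs Kjk$ and the mass bound, controlled by $\sum_k a_{j+k,k} c_{j+k}^N c_k^N + \sum_k a_{j,k} c_j^N c_k^N \leqs K j \|c^N(t)\|_1^2 + \text{(first term)}$; more carefully, the loss term is bounded by $Kj c_j^N \sum_k k c_k^N \leqs Kj \|c_0\|_1 c_j^N$ and the gain term by $K \sum_k (j+k) k c_{j+k}^N c_k^N \leqs K \|c_0\|_1 \sum_k k c_{j+k}^N \leqs K \|c_0\|_1^2$, so each $\dot c_j^N$ is bounded uniformly in $N$ and $t$ on compact time intervals. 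Hence $\{c_j^N\}_N$ is equibounded and equicontinuous (indeed uniformly Lipschitz) on each $[0,L]$; by Arzelà--Ascoli and a diagonal argument over $j \in \Nb$ there is a subsequence (still denoted $c^N$) and a limit $c = (c_j)$ with $c_j^N \to c_j$ uniformly on compact subsets of $[0,\infty)$, each $c_j$ continuous and nonnegative. Fatou's lemma gives $\|c(t)\|_1 \leqs \|c_0\|_1 < \infty$, so condition (i) of Definition~\ref{defsol} holds, and by Remark~\ref{rem1} condition (ii) holds as well since $a_{j,k} \leqs Kjk$.

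The substantive step, and the one I expect to be the main obstacle, is passing to the limit in the integral equation (\ref{frbk}) written in integrated form,
\[
 c_j^N(t) = c_{0,j}^N + \int_0^t \Bigl[\sum_{k=1}^{N-j} a_{j+k,k} c_{j+k}^N c_k^N - \sum_{k=1}^{N} a_{j,k} c_j^N c_k^N\Bigr]\,ds,
\]
to recover condition (iii). The pointwise convergence $c_j^N(s) \to c_j(s)$ handles any finite number of terms, so the issue is uniform smallness of the tails $\sum_{k > M}$ uniformly in $N$. For the loss term, $\sum_{k>M} a_{j,k} c_j^N c_k^N \leqs K j c_j^N \sum_{k>M} k c_k^N$, and one needs $\sum_{k>M} k c_k^N(s) \to 0$ as $M \to \infty$ uniformly in $N$ — i.e. uniform integrability of the first moments. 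This does not come for free from the mass bound alone; the standard device is to exploit that the map $t \mapsto \sum_{j \geqs m} j c_j^N(t)$ is nonincreasing (Proposition~\ref{prop2-1}(i) with $g_j = j$ and truncation level $m$), so $\sum_{j \geqs m} j c_j^N(t) \leqs \sum_{j \geqs m} j c_{0,j}^N \leqs \sum_{j \geqs m} j c_{0,j}$, and the right side tends to $0$ as $m \to \infty$ because $c_0 \in X_1$, uniformly in $N$ and $t$. This gives the required equi-integrability; the gain term is handled similarly, writing $\sum_{k>M} a_{j+k,k} c_{j+k}^N c_k^N \leqs K \|c_0\|_1 \sum_{k>M} k c_k^N(s)$. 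With these tail estimates one interchanges the limit, the integral (dominated convergence on $[0,t]$), and the infinite sums, obtaining (iii) for $c$.

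Finally I would note that the construction yields $T = +\infty$ and that the solution so obtained is an admissible solution in the sense of Definition~\ref{admsolnds}(i), and is density nonincreasing since $\|c(t)\|_1 = \lim_N \|\cdot\|$ bounds pass through the monotonicity; but for the theorem as stated it suffices to record existence on $[0,T)$ with $T \in (0,+\infty]$. The only delicate point to write carefully is the uniform tail control above, since everything else is a routine Arzelà--Ascoli-plus-diagonal argument; I would therefore structure the proof around first establishing the a priori bounds and the monotonicity of truncated moments, then the compactness, then the limit passage.
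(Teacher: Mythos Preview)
Your proposal is correct and follows essentially the same route as the paper: truncate, use Proposition~\ref{prop2-1}(i) to obtain the key uniform tail estimate $\sum_{j\geqs m} j c_j^N(t)\leqs \sum_{j\geqs m} j c_{0,j}$, extract a convergent subsequence, and pass to the limit in the integrated equation using that tail control. The only methodological difference is that the paper bounds $\dot c_j^N$ in $L^1(0,T)$ and invokes Helly's selection theorem (pointwise convergence to a BV limit), whereas you obtain a pointwise bound on $\dot c_j^N$ and use Arzel\`a--Ascoli; your variant gives uniform convergence on compacts immediately, which the paper recovers only afterward in Corollary~\ref{admis}.
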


\begin{proof}
As is usual in coagulation studies \cite{BC,L2002} the proof is based on passing to the limit \(N\to\infty\) in a 
sequence of solutions to the $N$-dimensional system (\ref{frbk}), which we do by an application of Helly's selection theorem,
and then by proving that the limit sequence is a solution to the infinite dimensional system (\ref{rbk}). In order to do 
this we need some bounds on the moments of the solutions to the finite dimensional systems. Actually, for the RBK system, the 
application of the method just described is much easier than in \cite{BC,L2002}  for the coagulation-fragmentation system with Smoluchowski
coagulation due to the {\it a priori\/} estimate (\ref{normp}) on the zeroth and first moments. 

 Let $c^N$ be the solution to the finite $N$-dimensional system (\ref{frbk}) satisfying the initial condition
 $c^N_j(0)=c_{0j},$ for $j\in\{1, \ldots, N\}$. By putting $g_j\equiv 1$ and $g_j\equiv j$ in (\ref{wfrbk})
 we immediately conclude that, for $1\leqs m\leqs N,$
 \[
  \sum_{j=m}^N\dot{c}^N_j \leqs 0,\quad \text{and}\quad\sum_{j=m}^Nj\dot{c}^N_j \leqs 0, 
 \]
respectively. Thus, for $p=0, 1$ we have
\begin{equation}
 \displaystyle{\sum_{j=m}^N}j^pc_j^N  \leqs  \displaystyle{\sum_{j=m}^N}j^pc_{0j}^N  =  \displaystyle{\sum_{j=m}^N}j^pc_{0j} \leqs 
 \displaystyle{\sum_{j=m}^\infty}j^pc_{0j}\leqs \displaystyle{\sum_{j=1}^\infty}j^pc_{0j}  =  \|c_0\|_p. \label{normp} 
\end{equation}
Let us now prove that $(c^N)$ is uniformly bounded in $W^{1,1}(0, T)$, for all fixed $T\in (0, \infty).$
From the definition of the norms in \(X_0\) and \(X_1\), and from (\ref{normp}) with $p=0$ we immediately get 
\begin{equation}
 \|c^N_j\|_{L^1(0,T)} = \int_0^T\!\!|c^N_j(s)|ds \leqs \int_0^T\!\!\|c^N(s)\|_0ds \leqs  \int_0^T\!\!\|c_0\|_0ds = T\|c_0\|_0 \leqs T\|c_0\|_1 \label{boundC0}
\end{equation}
By equation (\ref{frbk}) we have
\begin{eqnarray}
\left\|\frac{dc_j^N}{dt}\right\|_{L^1(0,T)} &=& \int_0^T\left|\frac{dc_j^N}{dt}(s)\right|ds  \nonumber \\
& \leqs & 
\int_0^T\sum_{k=1}^{N-j}W_{j+k,k}(c^N(s))ds + \int_0^T \sum_{k=1}^{N}W_{j,k}(c^N(s))ds \label{boundC1-1}
\end{eqnarray}
Estimating the first integral in (\ref{boundC1-1}) we obtain
\begin{eqnarray*}
\int_0^T\sum_{k=1}^{N-j}W_{j+k,k}(c^N(s))ds & \leqs & K\int_0^T\sum_{k=1}^{N-j}jkc_{j+k}^N(s)c_k^N(s)ds\\
& \leqs & K\int_0^T\sum_{k=1}^{N-j}(j+k)c_{j+k}^N(s)\sum_{k=1}^{N-j}kc_k^N(s)ds\\
& \leqs & K\int_0^T\|c^N(s)\|_1^2ds\; \leqs \; K\int_0^T\|c_0\|_1^2ds \\
& = & KT\|c_0\|^2_1,
\end{eqnarray*}
and for the second integral in (\ref{boundC1-1}) we get
\begin{eqnarray*}
\int_0^T\sum_{k=1}^{N}W_{j,k}(c^N(s))ds & \leqs & K\int_0^T\sum_{k=1}^{N}jkc_{j}^N(s)c_k^N(s)ds\\
& \leqs & K\int_0^T\sum_{k=1}^{N}jc_{j}^N(s)\sum_{k=1}^{N}kc_k^N(s)ds\\
& \leqs & K\int_0^T\|c^N(s)\|_1^2ds\; \leqs \; K\int_0^T\|c_0\|_1^2ds \\
& = & KT\|c_0\|^2_1.
\end{eqnarray*}
Thus, substituting in (\ref{boundC1-1}) we conclude that
\begin{equation}
 \left\|\frac{dc_j^N}{dt}\right\|_{L^1(0,T)} \leqs 2KT\|c_0\|^2_1
\end{equation}
and therefore
\[
 \|c^N_j\|_{L^1(0,T)} + \left\|\frac{dc_j^N}{dt}\right\|_{L^1(0,T)} \leqs (1+2K\|c_0\|_1)T\|c_0\|_1.
\]

So, by Helly's selection theorem, for each fixed $j$
there exists a subsequence of $(c^N_j)_N$ (not relabeled), converging pointwise to a BV function in $[0, T]$, $c_j(\cdot),$
\[
 c_j^N(t)\rightarrow c_j(t),\quad\text{as}\quad N\to\infty,\; \forall t\in [0, T], \;\forall j\in\Nb.
\]
But then, for each $q\in\Nb$, and for each $t\in\left[0,T\right]$,
\[
  \sum_{j=1}^qjc^N_j (t) \;\rightarrow\;   \sum_{j=1}^qjc_j (t),\;\text{ as }N\to\infty\,,
\]
and therefore, by (\ref{normp}), for any such $q$,  $ \sum_{j=1}^qjc_j (t)\leqs \|c_{0}\|_{1}$. By making $q\to\infty$, we obtain
\begin{equation}\label{est1norm}
 \sum_{j=1}^\infty jc_j (t)\leqslant\|c_{0}\|_{1}\,.
\end{equation}
Since proposition \ref{prop2-1}(ii) implies $c_{j}(t)\geqs 0$,  this proves that, not only $c(t)\in X^+_{1}$, for each $t\in\left[0,T\right]$, but also 
that condition (i) of definition \ref{defsol} is fulfilled.

It remains to be proven that the limit functions $c_j$ solve the RBK system (\ref{rbk}). In order to obtain this result,
we shall pass to the limit \(N\to\infty\) in the equation for $c_j^N$,
\[
 c_j^N(t) = c_{0 j} + \int_0^t\sum_{k=1}^{N-j}W_{j+k,k}(c^N(s))ds - \int_0^t \sum_{k=1}^{N}W_{j,k}(c^N(s))ds.
\]
Thus, we need to  prove that, for all $t\in[0, T],$
\begin{equation}
 \int_0^t \sum_{k=1}^{N}W_{j,k}(c^N(s))ds \xrightarrow[N\rightarrow \infty]{} \int_0^t \sum_{k=1}^{\infty}W_{j,k}(c(s))ds, \label{conv1}
\end{equation}
and
\begin{equation}
 \int_0^t\sum_{k=1}^{N-j}W_{j+k,k}(c^N(s))ds \xrightarrow[N\rightarrow \infty]{} \int_0^t\sum_{k=1}^{\infty}W_{j+k,k}(c(s))ds. \label{conv2}
\end{equation}
The proofs of (\ref{conv1}) and (\ref{conv2}) are entirely analogous, and so we shall present only the proof of (\ref{conv1}), leaving
the details of the other to the reader.

We first start by proving that the right-hand side of (\ref{conv1}) is well defined. Let $p$ be an arbitrarily fixed positive integer.
By the definition of $(c_j)$ we know that
\[
 \sum_{k=1}^{p}a_{j,k}c_j^Nc_k^N \xrightarrow[N\rightarrow \infty]{} \sum_{k=1}^{p}a_{j,k}c_jc_k,
\]
and from (\ref{normp}) we have that, for all positive integers $N$ and $p$,
\[
 \sum_{k=1}^{p}a_{j,k}c_j^Nc_k^N \leqs K\|c_0\|_1^2,
\]
and thus also
\[
 \sum_{k=1}^{p}a_{j,k}c_jc_k \leqs K\|c_0\|_1^2.
\]
Consequently, since the right-hand side is independent of $p$ and all the terms are nonnegative,
\[
 \sum_{k=1}^{\infty}a_{j,k}c_jc_k \leqs K\|c_0\|_1^2,
\]
 and the dominated convergence theorem implies that, for all $t\in (0, T)$, with $T<\infty$, the right-hand side of (\ref{conv1}) is well defined.

Now we shall prove the limit in (\ref{conv1}) holds.
Let $m$ be a positive integer such that \(1\leqs m < N <\infty\) but otherwise arbitrarily fixed. Then
\begin{eqnarray}
 \lefteqn{\left|\displaystyle{\int_0^t\sum_{k=1}^Na_{j,k}c_j^N(s)c_k^N(s)ds - \int_0^t\sum_{k=1}^{\infty}a_{j,k}c_j(s)c_k(s)ds}\right| \leqs}\nonumber \\
&\leqs & \quad\displaystyle{\int_0^t\sum_{k=1}^{m-1}a_{j,k}\left|c_j^N(s)c_k^N(s)-c_j(s)c_k(s)\right|ds} +   \label{convfin}\\
& & + \displaystyle{\int_0^t\sum_{k=m}^{N}a_{j,k}c_j^N(s)c_k^N(s)ds} + 
      \displaystyle{\int_0^t\sum_{k=m}^{\infty}a_{j,k}c_j(s)c_k(s)ds},\label{convrest}
\end{eqnarray} 
and we need to prove that the right-hand side of this inequality can be made arbitrarily small when $N\to\infty,$ by choosing 
$m$ sufficiently large.

Since each term in the sum in (\ref{convfin}) converges pointwise to zero, the sum has a finite fixed number of terms, 
and its absolute value is bounded above by $2K\|c_0\|_1^2$, the dominated convergence theorem implies that (\ref{convfin}) converges to zero as $N\to\infty$.

Let us now consider the integrals in (\ref{convrest}). Define
$\rho_m := \|c_0\|_1\sum_{j=m}^{\infty}jc_{0 j}.$ Clearly $\rho_m\to 0$ as $m\to\infty.$

From (\ref{normp}) we conclude that
\begin{eqnarray}
 \displaystyle{\int_0^t\sum_{k=m}^{N}a_{j,k}c_j^N(s)c_k^N(s)ds} & \leqs & K\displaystyle{\int_0^t\sum_{k=m}^{N}jc_j^N(s)kc_k^N(s)ds} \nonumber\\
& \leqs & K\displaystyle{\int_0^t \|c_0\|_1\sum_{k=m}^{N}kc_k^N(s)ds \;\,\leqs\;\, K\int_0^t\rho_mds}  \nonumber\\
& = & KT\rho_m,
\end{eqnarray}
and so we get the first integral in (\ref{convrest}) can be made arbitrarily small by choosing $m$ sufficiently large. 
For the second integral the result is proved in an analogous way:
For all $1\leqs m < p$ we have
\[
 \sum_{k=m}^pa_{j,k}c_j^Nc_k^N \xrightarrow[N\rightarrow \infty]{} \sum_{k=m}^pa_{j,k}c_jc_k.
\]
Due to (\ref{normp}), the sum in the left-hand side is bounded by \(K\rho_m\), and so we also get
\[
 \sum_{k=m}^pa_{j,k}c_jc_k \leqs K\rho_m,
\]
for all $p$. Since this bound is uniform in $p$, we have
\[
 \sum_{k=m}^pa_{j,k}c_jc_k\xrightarrow[p\rightarrow \infty]{} \sum_{k=m}^{\infty}a_{j,k}c_jc_k \leqs K\rho_m.
\]
Hence, by the dominated convergence theorem, the second integral in (\ref{convrest}) can also be made arbitrarily small by choosing
$m$  and $N$ sufficiently large.

This completes the proof of (\ref{conv1}). As pointed out above, the proof of (\ref{conv2}) is entirely analogous and will be omitted.
\end{proof}

\begin{coro}\label{admis}
The solution obtained in Theorem \ref{theoexist} can be extended to $t\in [0,+\infty[$  as an admissible solution.  
\end{coro}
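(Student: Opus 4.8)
The plan is to re-run the construction of Theorem~\ref{theoexist} globally in time, replacing Helly's selection theorem by the Arzel\`a--Ascoli theorem together with a diagonal extraction over an exhausting sequence of compact time intervals, and then to check that the resulting global solution meets Definition~\ref{admsolnds}(i). Two facts make this painless and are worth isolating first. By Proposition~\ref{prop2-1}(iii) the truncated solutions $c^N$ are defined on all of $[0,+\infty)$, and by Proposition~\ref{prop2-1}(ii) together with (\ref{normp}) (applied with $p=0$, $m=1$) one has $0\leqs c^N_j(t)\leqs\|c^N(t)\|_0\leqs\|c_0\|_0$ for all $j$, $N$, $t$. Moreover, from (\ref{frbk}), $a_{j,k}\leqs Kjk$ and (\ref{normp}),
\[
\left|\frac{dc^N_j}{dt}(t)\right|\leqs\sum_{k=1}^{N-j}W_{j+k,k}(c^N(t))+\sum_{k=1}^{N}W_{j,k}(c^N(t))\leqs 2K\|c^N(t)\|_1^2\leqs 2K\|c_0\|_1^2=:L,
\]
for all $j$, $N$ and $t\geqs0$, so that the family $\{c^N_j\}_{N}$ is uniformly bounded and $L$-Lipschitz, with $L$ independent of $j$ and $N$.

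First I would apply the Arzel\`a--Ascoli theorem on $[0,1]$ to $\{c^N_1\}_N$, then to $\{c^N_2\}_N$, and so on, and then perform a further diagonal extraction over $\ell\in\Nb$, obtaining a single subsequence (not relabelled) with $c^N_j\to c_j$ uniformly on $[0,\ell]$ for every $j,\ell\in\Nb$, i.e.\ uniformly on compact subsets of $[0,+\infty)$. Each $c_j$ is then $L$-Lipschitz, hence continuous, nonnegative, and exactly the argument leading to (\ref{est1norm}) in the proof of Theorem~\ref{theoexist} gives $\sum_{j}jc_j(t)\leqs\|c_0\|_1$ for every $t\geqs0$; thus $c(t)\in X_1^+$, $\sup_{t\geqs0}\|c(t)\|_1<\infty$, and condition (ii) of Definition~\ref{defsol} follows from Remark~\ref{rem1}. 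To see that $c$ satisfies the integral equation in Definition~\ref{defsol}(iii) on all of $[0,+\infty)$, I would note that the limits (\ref{conv1})--(\ref{conv2}) in the proof of Theorem~\ref{theoexist} were established on an arbitrary finite interval $[0,T]$; taking $T=\ell$ for each $\ell$ yields the identity for every $t\geqs0$. Hence $c$ is a solution of (\ref{rbk}) on $[0,+\infty)$.

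Finally, for admissibility, I would invoke the remark following Proposition~\ref{prop2-1}: for each $N$ in the chosen subsequence, $\widehat c^N:=(c^N_1,\ldots,c^N_N,0,0,\ldots)$ is a solution of (\ref{rbk}) on $[0,+\infty)$ with $\widehat c^N_j\equiv0$ for $j>N$, and $\widehat c^N\to c$ uniformly on compact subsets of $[0,+\infty)$ (componentwise; in fact even in the norm of $X_1$, since $\sum_{j>q}j\,\widehat c^N_j(t)\leqs\sum_{j>q}jc_{0j}\to0$ as $q\to\infty$, uniformly in $N$ and $t$). By Definition~\ref{admsolnds}(i), $c$ is an admissible solution, and it is in particular a solution on $[0,T)$ of the type produced by Theorem~\ref{theoexist}, so it is the asserted extension. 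The construction is essentially routine; the one point requiring attention is that the double diagonalization (over $j$ and over $\ell$) be set up correctly and that the limit passages of Theorem~\ref{theoexist} be recognised as valid on every compact time interval. What makes the whole thing work cleanly — in contrast with Smoluchowski coagulation — is the uniform equicontinuity bound above, which is free here because of the a priori estimate (\ref{normp}) and which upgrades Helly's merely pointwise limit to the uniform-on-compacts limit demanded by Definition~\ref{admsolnds}(i).
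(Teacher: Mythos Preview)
Your proof is correct and takes a somewhat different route from the paper's. The paper's argument stays with the Helly subsequence already extracted in Theorem~\ref{theoexist} and, using the tail estimate $\sum_{j\geqs m}j|c_j(t)-c_j^N(t)|\leqs 2\sum_{j\geqs m}jc_{0j}$ (precisely the bound you invoke in your parenthetical remark), argues that $\|c^N(t)-c(t)\|_1\to 0$ uniformly on $[0,T]$; the extension to $[0,+\infty)$ then follows from the arbitrariness of $T$ together with (\ref{est1norm}). You instead return to the truncated systems, extract the uniform Lipschitz bound $|\dot c_j^N|\leqs 2K\|c_0\|_1^2$ from (\ref{frbk}) and (\ref{normp}), and apply Arzel\`a--Ascoli with a double diagonal extraction to obtain componentwise uniform convergence on compacts directly, after which the same tail estimate upgrades this to $X_1$-convergence. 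What your approach buys is an explicit proof of the componentwise \emph{uniform} convergence of $c_j^N$ to $c_j$: the paper's argument, as written, combines the uniform tail bound with only the \emph{pointwise} Helly convergence of each term, and that combination alone does not yield uniform-in-$t$ convergence of the sum; the missing ingredient is exactly the equicontinuity you supply. Both routes ultimately rest on the same a priori estimate (\ref{normp}); yours makes the equicontinuity step transparent, while the paper's is shorter but leaves that step implicit.
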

\begin{proof}
The uniform convergence property is again a consequence of (\ref{normp}). In fact, by applying to (\ref{normp}) an argument 
 similar to the one that led us to (\ref{est1norm}) we obtain, for each $m,N\in\Nb$, $t\in[0,T]$,
\[
\sum_{j=m}^{\infty}j|c_{j}(t)-c_{j}^N(t)|\leqs 2\sum_{j=m}^{\infty}jc_{0j}\,.
\]
Since $\sum_{j=m}^{\infty}jc_{0j}\to 0$, as $m\to\infty$, we conclude that the series 
in the l.h.s. of this inequality with $m=1$ is convergent uniformly in 
$(t,N)\in[0,T]\times\Nb$. Since, for each $j$ and $t$, $j|c_{j}(t)-c_{j}^N(t)|\to 0$, 
as $N\to\infty$,  we conclude that, as $N\to\infty$,
\[
 \sum_{j=1}^\infty jc_j^N (t)\;\to\; \sum_{j=1}^\infty jc_j (t)\,,
\]
as $N\to\infty$, uniformly in $t\in[0,T]$.

That $c(\cdot)$ is extendable to $[0,+\infty[$ is a consequence of the arbitrariness of $T>0$ 
and estimate (\ref{est1norm}).
\end{proof}

%=================================================================================
%                                                                        Moments' equation  
%=================================================================================

\section{The moments' equation}
As in the studies of the usual coagulation-fragmentation systems, a weak formulation of (\ref{rbk}) is a tool of the utmost
importance. This weak version, presented next, is the version of the expression (i) of Proposition~\ref{prop2-1}, 
written for (\ref{rbk}) instead of (\ref{frbk}).

%============================ Proposition: equation for the finite system =====================

\begin{prop}\label{prop2-2}
Let $c=\bigl(c_j(\cdot)\bigr): I_{\rm max}\to\Rb^N$ be a solution of (\ref{rbk}) 
 and let $\tau, t\in I_{\rm max}$ be such that $\tau\leqs t.$ For every sequence $(g_j),$ and all
positive integers $m$ and $n$ with $m<n$ the following {\em moment's equation\/} holds

\begin{multline}
\sum_{j=m}^ng_jc_j(t)- \sum_{j=m}^ng_jc_j(\tau) \;=\\%}\\ \nonumber
 = - \int_{\tau}^t\sum_{S_1}(g_j-g_{j-k})W_{j,k} - \int_{\tau}^t\sum_{S_2}g_jW_{j,k}
+ \int_{\tau}^t\sum_{S_3}g_{j-k}W_{j,k}. \label{momeq}
\end{multline}
where 
\[
\begin{array}{l}
 S_1=S_1(m,n):=\{(j,k)\in\Nb^2|\; m+1\leqs j< n+1,\, 1\leqs k\leqs j-m\},  \\
 S_2=S_2(m,n):=\{(j,k)\in\Nb^2|\; m\leqs j< n+1,\, k\geqs j-m+1\}\\
 S_3=S_3(m,n):=\{(j,k)\in\Nb^2|\; j\geqs n+1,\, j-n\leqs k\leqs j-m\}.
\end{array}
\]    
\end{prop}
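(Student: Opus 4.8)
The plan is to mimic the derivation of Proposition~\ref{prop2-1}(i), but now working with the infinite system~(\ref{rbk}) and keeping careful track of the ``boundary'' terms produced by the fact that the inner sum over $k$ is infinite (rather than truncated at $N-j$). The starting point is condition~(iii) of Definition~\ref{defsol}, which gives, for each $j$,
\[
c_j(t)-c_j(\tau)=\int_\tau^t\sum_{k=1}^\infty W_{j+k,k}(c(s))\,ds-\int_\tau^t\sum_{k=1}^\infty W_{j,k}(c(s))\,ds.
\]
First I would multiply this identity by $g_j$ and sum over $j\in\{m,\ldots,n\}$. Since only finitely many $j$ are involved, the sum and the integrals commute without any further justification, and
\[
\sum_{j=m}^n g_j c_j(t)-\sum_{j=m}^n g_j c_j(\tau)=\int_\tau^t\sum_{j=m}^n\sum_{k=1}^\infty g_j W_{j+k,k}\,ds-\int_\tau^t\sum_{j=m}^n\sum_{k=1}^\infty g_j W_{j,k}\,ds.
\]

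The core of the argument is then a purely combinatorial rearrangement of the two double sums appearing on the right. For the \emph{loss} term $\sum_{j=m}^n\sum_{k=1}^\infty g_j W_{j,k}$, I split the index set $\{(j,k):m\leqs j\leqs n,\ k\geqs 1\}$ according to whether $k\leqs j-m$ or $k\geqs j-m+1$; the first piece is (almost) $S_1$ and the second is exactly $S_2$. For the \emph{gain} term $\sum_{j=m}^n\sum_{k=1}^\infty g_j W_{j+k,k}$, I perform the change of summation index $j\mapsto j-k$ (i.e.\ relabel the produced cluster's size), which turns $g_j W_{j+k,k}$ into $g_{j-k}W_{j,k}$ summed over $\{(j,k): j-k\geqs m,\ j-k\leqs n,\ k\geqs 1\}$, that is over $\{(j,k): m+k\leqs j\leqs n+k,\ k\geqs 1\}$. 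Now I intersect this with $j\leqs n$ versus $j\geqs n+1$: the part with $m+k\leqs j\leqs n$ matches the $k\leqs j-m$ portion of the loss term and combines with it to give the telescoped coefficient $g_j-g_{j-k}$ on $S_1$; the leftover part with $n+1\leqs j\leqs n+k$, i.e.\ $j-n\leqs k\leqs j-m$ with $j\geqs n+1$, is precisely the sum over $S_3$. Collecting the three surviving pieces yields~(\ref{momeq}).

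The one genuine subtlety — and what I expect to be the main obstacle — is that all of the rearrangements above involve \emph{infinite} sums (and an integral in $s$), so I must justify that reindexing and regrouping are legitimate, i.e.\ that everything converges absolutely. Here condition~(ii) of Definition~\ref{defsol} together with condition~(i) does the work: for each fixed $j$ the series $\sum_k a_{j,k}c_j(s)c_k(s)$ is integrable on $[\tau,t]$, and since $g$ is a fixed sequence and only finitely many values $g_m,\ldots,g_n$ (and, for the gain term, $g_{j-k}$ with $m\leqs j-k\leqs n$) occur, each double sum that appears is dominated by a finite linear combination of such integrable series. One should also note $W_{j,k}=W_{k,j}\geqs 0$, so for the nonnegative solutions under consideration absolute convergence is automatic and Tonelli/Fubini applies; this lets me freely interchange $\int_\tau^t$, $\sum_j$, and $\sum_k$, and perform the shift $j\mapsto j-k$. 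Once this bookkeeping is in place the identity~(\ref{momeq}) drops out of the index manipulation, exactly as in the finite-dimensional case~(\ref{wfrbk}), with $S_3$ accounting for the clusters that, in the truncated system, simply could not be produced.
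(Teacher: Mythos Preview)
Your proposal is correct and matches the paper's approach: the paper does not spell out a proof of this proposition at all, merely introducing it as ``the version of the expression (i) of Proposition~\ref{prop2-1}, written for (\ref{rbk}) instead of (\ref{frbk})''---i.e.\ exactly the index-shift/splitting computation you describe, with the extra $S_3$ piece appearing because the inner sum in the gain term now runs to infinity. One small remark: what you call ``(almost) $S_1$'' in the loss term is in fact exactly $S_1$, since for $j=m$ the constraint $k\leqs j-m=0$ is empty; and the integrability of the gain series (needed for your Fubini/Tonelli step) follows from (iii) together with the integrability of the loss series that you already extracted from (i)--(ii).
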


In Fig.~\ref{regions} we give a geometric representation of the regions $S_j$.
%%%%%%%%%%%%%%%%%%%%%%%%%%%%%%%%%%%%%%%%%%%%%%%%%%%%%%%%%%%%%%%%%%%%%%%%%%%%%%%%%%%%
%
%
\begin{figure}[ht]
\begin{center}
\psfrag{j}{$j$}
\psfrag{k}{$k$}
\psfrag{1}{$1$}
\psfrag{n}{$n$}
\psfrag{n+1}{$n+1$}
\psfrag{m}{$m$}
\psfrag{m+1}{$m+1$}
\psfrag{S1}{$S_1$}
\psfrag{S2}{$S_3$}
\psfrag{S3}{$S_2$}
\includegraphics[scale=.35]{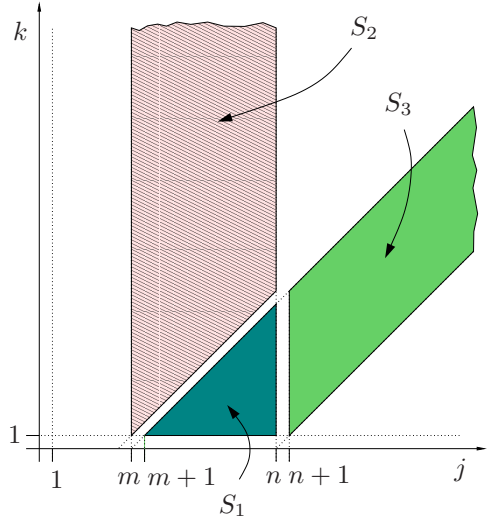}
\end{center}
\caption{Regions $S_j$ defined in Proposition~\ref{prop2-2}}\label{regions}
\end{figure}
%
%
%%%%%%%%%%%%%%%%%%%%%%%%%%%%%%%%%%%%%%%%%%%%%%%%%%%%%%%%%%%%%%%%%%%%%%%%%%%%%%%%%%%%

%============================== Lemma ============================================

\begin{lemma}\label{momlem}
Suppose the coefficients  \(a_{j,k}\) satisfy the condition \(a_{j,k}\leqslant Kjk \),  
then
\begin{equation}
  \lim_{m\to \infty}\int_{\tau}^{t}m \sum_{S_2(m,\infty)}W_{j,k} ds =0,\label{limT2}
\end{equation}
\end{lemma}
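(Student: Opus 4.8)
With $c$, $\tau$ and $t$ as in Proposition~\ref{prop2-2}, the plan is \emph{not} to bound the weighted sum over $S_2(m,\infty)$ term by term: the obvious estimate $a_{j,k}\leqs Kjk$ only produces a quantity of order $m$ times a first‑moment tail of $c$, which need not tend to zero. Instead I would feed the moment equation~(\ref{momeq}) back on itself, choosing the constant weight $g_j\equiv1$ so that the factor $g_j-g_{j-k}$ vanishes and the $S_1$‑contribution disappears; this rewrites the $S_2$‑integral purely in terms of zeroth moments of $c$, after which the stray factor $m$ is absorbed harmlessly into first‑moment tails.

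First I would apply~(\ref{momeq}) with $g_j\equiv1$ for an arbitrary integer $n>m$, which gives
\[
\int_{\tau}^{t}\sum_{S_2(m,n)}W_{j,k}\,ds=\sum_{j=m}^{n}\bigl(c_j(\tau)-c_j(t)\bigr)+\int_{\tau}^{t}\sum_{S_3(m,n)}W_{j,k}\,ds .
\]
Next I would let $n\to\infty$ with $m$ fixed. The finite sum converges to $\sum_{j\geqs m}\bigl(c_j(\tau)-c_j(t)\bigr)$, which is finite since $c(\tau),c(t)\in X_1^+\subset X_0^+$. For the $S_3$‑term, on $S_3(m,n)$ one has $1\leqs k\leqs j-m$ and $j\geqs n+1$, so $a_{j,k}c_jc_k\leqs K(jc_j)(kc_k)$ yields $\sum_{S_3(m,n)}W_{j,k}(c(s))\leqs K\|c(s)\|_1\sum_{j\geqs n+1}jc_j(s)\leqs KM\sum_{j\geqs n+1}jc_j(s)$, where $M:=\sup_{s\in[\tau,t]}\|c(s)\|_1<\infty$ by Definition~\ref{defsol}(i); this is dominated by the constant $KM^2$ and tends to $0$ pointwise in $s$ (being a tail of a convergent series), so dominated convergence forces $\int_{\tau}^{t}\sum_{S_3(m,n)}W_{j,k}\,ds\to0$. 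Since the regions $S_2(m,n)$ increase to $S_2(m,\infty)$ and $W_{j,k}\geqs0$, monotone convergence then identifies the limit of the left‑hand side, and I obtain
\[
\int_{\tau}^{t}\sum_{S_2(m,\infty)}W_{j,k}\,ds=\sum_{j\geqs m}\bigl(c_j(\tau)-c_j(t)\bigr).
\]

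Finally I would multiply by $m$ and use that $m\leqs j$ on $\{j\geqs m\}$, hence $0\leqs mc_j(s)\leqs jc_j(s)$, to get
\[
\left|\int_{\tau}^{t}m\sum_{S_2(m,\infty)}W_{j,k}\,ds\right|=m\,\Bigl|\sum_{j\geqs m}\bigl(c_j(\tau)-c_j(t)\bigr)\Bigr|\leqs\sum_{j\geqs m}jc_j(\tau)+\sum_{j\geqs m}jc_j(t),
\]
and both tails on the right tend to $0$ as $m\to\infty$ because $\sum_j jc_j(\tau)=\|c(\tau)\|_1$ and $\sum_j jc_j(t)=\|c(t)\|_1$ are finite. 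This is exactly~(\ref{limT2}).

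The one genuine idea here is the self‑referential use of the moment identity with constant weight; everything else is convergence‑theorem bookkeeping. The step I expect to need the most care — and the likeliest point of contention — is the passage $n\to\infty$: one must justify both that the $S_3$‑integral disappears (this is precisely where the a priori bound $\sup_{[\tau,t]}\|c(\cdot)\|_1<\infty$ enters, or, for admissible solutions, the sharper uniform tail control of Corollary~\ref{admis}) and that $\int_{\tau}^{t}\sum_{S_2(m,n)}W_{j,k}\,ds$ really converges to $\int_{\tau}^{t}\sum_{S_2(m,\infty)}W_{j,k}\,ds$.
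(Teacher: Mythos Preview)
Your proof is correct and follows essentially the same route as the paper: take $g_j\equiv1$ in~(\ref{momeq}) so the $S_1$-term drops, pass $n\to\infty$ (killing the $S_3$-integral by dominated convergence and identifying the $S_2$-limit) to express $\int_{\tau}^{t}\sum_{S_2(m,\infty)}W_{j,k}\,ds$ as the difference of zeroth-moment tails, then multiply by $m$ and bound by first-moment tails. The only cosmetic difference is that you invoke monotone convergence for the $S_2$-side where the paper uses dominated convergence; both are valid here.
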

\begin{Proof}
Considering \(g_j\equiv 1 \) in the moments' equation \eqref{momeq}  we obtain
\begin{equation}
\sum_{j=m}^nc_j(t)- \sum_{j=m}^nc_j(\tau)
 = - \int_{\tau}^t\sum_{S_2}W_{j,k}
+ \int_{\tau}^t\sum_{S_3}W_{j,k}.\label{momj1}
\end{equation}
We start by estimating the expression in the \(S_3\) region. We clearly have
 \[
  \sum_{S_3(m,n)}W_{j,k} \leqslant  K \sum_{j=n+1}^{\infty}\sum_{k=j-n}^{j-m}(jc_j)(kc_k)\leqslant
   K  \norm{c(t)}_1^2,
 \]
 and also
\(
 \displaystyle{\sum_{S_3(m,n)}W_{j,k} \leqslant K\norm{c(t)}_1 \sum_{j=n+1}^{\infty}jc_j(t)\to 0}, 
\)
\(\text{pointwise as } n\to \infty\). 
Since, from the definition of solution, \(\norm{c(t)}_1 \) is bounded in \([\tau,t] \), applying the dominated convergence theorem gives
\[
 \int_{\tau}^{t}\sum_{S_3(m,n)}W_{j,k}ds \to 0, \text{ as } n\to \infty .
\]
In \(S_2\) the same bound \( \displaystyle{\sum_{S_2(m,n)}W_{j,k} \leqs K \norm{c(t)}_1^2}\)  is true
and the convergence
\[
\sum_{j=m}^{n}\sum_{k=j-m+1}^{\infty}W_{j,k} \to  \sum_{j=m}^{\infty}\sum_{k=j-m+1}^{\infty}W_{j,k},
\]
is valid pointwise in $t$ as \( n\to \infty \). Hence, again by the dominated convergence theorem,
\[
 \int_{\tau}^{t} \sum_{S_2(m,n)} W_{j,k}  \to  \int_{\tau}^{t} \sum_{S_2(m,\infty)} W_{j,k} , \text{ as }  n\to \infty.
\]
Now taking limits, as \( n\to\infty \) on both sides of \eqref{momj1} we obtain
\begin{equation*}
\sum_{j=m}^{\infty}c_j(t)- \sum_{j=m}^{\infty}c_j(\tau)
 = - \int_{\tau}^t\sum_{S_2(m,\infty)}W_{j,k}.
\end{equation*}%\label{momj1}

But \(m\sum_{j=m}^{\infty}c_j\leqslant \sum_{j=m}^{\infty}j c_j \to 0 \text{ as } m\to \infty\), since by definition of solution, \(c\in X_1 \), we then obtain 
\begin{equation}
  \lim_{m\to \infty}\int_{\tau}^{t}m \sum_{S_2(m,\infty)}W_{j,k} ds =0.%\label{limT2}
\end{equation}
This concludes the proof.
\end{Proof}

Another estimate that will be useful is the following

%======================= Proposition: equation for the infinite dim case =========================

\begin{prop}
Suppose the coefficients  \(a_{j,k}\) satisfy the condition \(a_{j,k}\leqslant Kjk \), and  the sequence  \((g_j)\) satisfies  \(| g_j| \leqslant j \) 
then, for each $m\in\Nb$,
\begin{equation}
 \sum_{j=m}^{\infty}g_jc_j(t) - \sum_{j=m}^{\infty} g_jc_j(\tau)=-\lim_{n\to \infty}\int_{\tau}^{t}\biggl[ \sum_{S_1(m,n)}(g_j -g_{j-k})W_{j,k} +
 \sum_{S_2(m,n)}g_jW_{j,k} \biggr].\label{limSSS}
\end{equation}
Furthermore, with the stronger assumptions \(a_{j,k}\leqslant K(jk)^\beta \) with \(0\leqslant \beta \leqslant \frac{1}{2} \), and the sequence 
\((g_j)\) satisfying \(| g_j| \leqslant j \)  and \(|g_{j}-g_{k}|\leqs M|j-k|\), for all \(j \) and \(k\), and for some positive constant $M$,
the following holds true:
\begin{equation}
 \sum_{j=m}^{\infty}g_jc_j(t) - \sum_{j=m}^{\infty} g_jc_j(\tau)=-\int_{\tau}^{t}
 \sum_{S_{1}(m,\infty)}(g_j - g_{j-k}) W_{j,k} -\int_{\tau}^{t}\sum_{S_{2}(m,\infty)} g_jW_{j,k}.\label{weakmoment}
\end{equation}
\end{prop}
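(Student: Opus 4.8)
The plan is to obtain both identities by passing to the limit $n\to\infty$ in the moments' equation \eqref{momeq} with $m$ kept fixed, the first part under the sole hypothesis $a_{j,k}\le Kjk$ and the second after strengthening the control on the two surviving sums. For \eqref{limSSS} I would first rewrite \eqref{momeq} as
\[
\int_\tau^t\Bigl[\sum_{S_1(m,n)}(g_j-g_{j-k})W_{j,k}+\sum_{S_2(m,n)}g_jW_{j,k}\Bigr]
=-\Bigl(\sum_{j=m}^ng_jc_j(t)-\sum_{j=m}^ng_jc_j(\tau)\Bigr)+\int_\tau^t\sum_{S_3(m,n)}g_{j-k}W_{j,k}.
\]
Since $|g_j|c_j\le jc_j$ and $c(t),c(\tau)\in X_1$, the bracketed difference tends to $\sum_{j=m}^\infty g_jc_j(t)-\sum_{j=m}^\infty g_jc_j(\tau)$ as $n\to\infty$, so the whole matter reduces to showing that the $S_3$–integral vanishes in the limit.

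The key point for that is the inclusion $S_3(m,n)\subseteq S_2(n+1,\infty)$ — both regions require $j\ge n+1$ and $k\ge j-n$ — together with the estimate $|g_{j-k}|\le j-k\le n$ valid throughout $S_3(m,n)$. Invoking the identity $\int_\tau^t\sum_{S_2(m',\infty)}W_{j,k}=\sum_{j\ge m'}c_j(\tau)-\sum_{j\ge m'}c_j(t)$, which is established inside the proof of Lemma~\ref{momlem}, with $m'=n+1$, one gets
\[
\Bigl|\int_\tau^t\sum_{S_3(m,n)}g_{j-k}W_{j,k}\Bigr|\le n\int_\tau^t\sum_{S_2(n+1,\infty)}W_{j,k}
= n\Bigl(\sum_{j\ge n+1}c_j(\tau)-\sum_{j\ge n+1}c_j(t)\Bigr)\le\sum_{j\ge n+1}jc_j(\tau)\;\xrightarrow[n\to\infty]{}\;0,
\]
where the last inequality uses $nc_j(\tau)\le jc_j(\tau)$ for $j\ge n+1$ and the convergence uses $c(\tau)\in X_1$. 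This proves \eqref{limSSS}.

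For \eqref{weakmoment} note that $a_{j,k}\le K(jk)^\beta\le Kjk$, so \eqref{limSSS} is available; it then suffices to split the integrand in \eqref{limSSS} into its $S_1$ and $S_2$ parts and pass each resulting integral to the limit separately, i.e.\ $\int_\tau^t\sum_{S_i(m,n)}\to\int_\tau^t\sum_{S_i(m,\infty)}$ for $i=1,2$. I would prove this by bounding the two limiting sums uniformly in time. On $S_1(m,\infty)$ one has $|g_j-g_{j-k}|\le Mk$ and $k\le j$, so $k\,(jk)^\beta\le k\,j^{2\beta}$, and since $2\beta\le 1$,
\[
\sum_{S_1(m,\infty)}|g_j-g_{j-k}|\,W_{j,k}\le MK\|c\|_1\sum_{j\ge m+1}j^{2\beta}c_j\le MK\|c\|_1^2.
\]
On $S_2(m,\infty)$, reorganising the sum so that for each $k$ the index $j$ runs over $m\le j\le k+m-1$ gives $j^{1+\beta}\le (k+m-1)^\beta j$, whence, again using $2\beta\le1$, $\sum_{S_2(m,\infty)}|g_j|W_{j,k}\le K\|c\|_1\sum_{k\ge1}(k+m-1)^{2\beta}c_k\le K\|c\|_1(\|c\|_1+(m-1)\|c\|_0)$. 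Since $\sup_s\|c(s)\|_1<\infty$ by Definition~\ref{defsol}, these finite bounds dominate the $n$–truncated integrands by constants that are integrable on $[\tau,t]$; as $S_i(m,n)\uparrow S_i(m,\infty)$, dominated convergence — applied first to the series for fixed $s$, then to the $s$–integral — yields the two limits, and substituting them into \eqref{limSSS} produces \eqref{weakmoment}.

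The step I expect to be the genuine obstacle is the vanishing of the $S_3$–integral in the first part: unlike the $g_j\equiv1$ situation of Lemma~\ref{momlem}, the weight $g_{j-k}$ is not bounded as $n\to\infty$, and a direct estimate produces a spurious factor $n$; it is precisely the inclusion $S_3(m,n)\subseteq S_2(n+1,\infty)$ together with the elementary bound $n\sum_{j\ge n+1}c_j(\tau)\le\sum_{j\ge n+1}jc_j(\tau)$ that renders that factor harmless. In the second part the only delicate ingredients are the two moment estimates above, where the restriction $\beta\le\frac{1}{2}$ enters exactly to turn $j^{2\beta}$ and $(k+m-1)^{2\beta}$ into $j$ and $k+m-1$.
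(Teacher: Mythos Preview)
Your proof is correct and follows essentially the same route as the paper: the $S_3$--integral is killed via the inclusion $S_3(m,n)\subset S_2(n+1,\infty)$ together with Lemma~\ref{momlem}, and \eqref{weakmoment} is then obtained from \eqref{limSSS} by uniform bounds on the $S_1$-- and $S_2$--sums plus dominated convergence. Your $S_2$--estimate is in fact slightly more careful than the paper's, which tacitly uses $j\leqs k$ on $S_2(m,n)$ (literally true only for $m=1$), whereas your use of $j\leqs k+m-1$ handles general $m$ cleanly.
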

\begin{proof}
In the moments' equation \eqref{momeq} we prove that 
\begin{equation}\label{limS3}
\int_{\tau}^{t}\sum_{S_3(m,n)}g_{j-k}W_{j,k}\to 0\quad \text{as}\quad n\to\infty\,.
\end{equation}
In fact, we observe that \( S_3(m,n)\subset S_2(n+1,\infty)\), and since in  \( S_3(m,n) \) it holds that \( |j-k|=j-k \leqslant n\), we get,
by the previous lemma,  %\eqref{limT2},
\begin{multline*}
 0\leqslant \biggl|\int_{\tau}^{t}\sum_{S_3(m,n)}g_{j-k}W_{j,k}\biggr|\leqslant
 \int_{\tau}^{t}\sum_{S_3(m,n)}|j-k|W_{j,k}\leqslant\\
 \leqslant (n+1)\int_{\tau}^{t}\sum_{S_2(n+1,\infty)}W_{j,k} \to 0, \text{ as } n\to \infty,
\end{multline*}
thus proving \eqref{limS3}.
As a consequence, by taking the limit as \(n\to \infty\) in 
 \eqref{momeq}, we obtain \eqref{limSSS}.
Now, by imposing the stronger conditions of the second part of the proposition, we have, for each \(n\in\Nb\),
\begin{equation}\label{S2gj}
\biggl|\sum_{S_{2}(m,n)} g_jW_{j,k}\biggr|\leqs K\sum_{S_{2}(m,n)}j(jk)^\beta c_{j}c_{k}
\leqs  K\sum_{S_{2}(m,n)}jk c_{j}c_{k}
\leqs K\|c\|_{1}\sum_{j=m}^n jc_{j},
\end{equation}
where in the second inequality we have used the fact that, if \((j,k)\in S_{2}(m,n)\), then \(j\leqs k\)\,, and so
\((jk)^{\beta}\leqs k^{2\beta}\leqs k,\) due to the assumption $\beta\leqs \frac{1}{2}.$ Similarly,  
for $(j,k)$ in \(S_{1}(m,n)\) we have
\(k< j\), and thus
\begin{multline}\label{S1gj}
\sum_{S_1(m,n)}|g_j -g_{j-k}|W_{j,k}\leqs  MK\sum_{S_1(m,n)}k(jk)^\beta c_{j}c_{k}\\
\leqs MK\sum_{S_1(m,n)}jk c_{j}c_{k}
\leqs MK\|c\|_{1}\sum_{j=m}^n jc_{j}.
\end{multline}
Estimates (\ref{S2gj}) and (\ref{S1gj}) and (i) 
from the Definition \ref{defsol}, together with the dominated convergence theorem allow us to prove (\ref{weakmoment}).
\end{proof}

%================= Corollary: density and number of clusters =====================================

\begin{coro}
Suppose the coefficients  \(a_{j,k}\) satisfy the condition \(a_{j,k}\leqslant Kjk \),
then any solution is a density nonincreasing solution. Moreover, with the stronger assumption that 
\(a_{j,k}\leqslant K(jk)^\beta \) with \(0\leqslant \beta \leqslant \frac{1}{2} \), the following holds true
\begin{align}
\sum_{j=1}^{\infty}jc_j(t) - \sum_{j=1}^{\infty} jc_j(\tau)&=
-2\int_{\tau}^{t}\sum_{j=1}^{\infty}\sum_{k=j+1}^{\infty}j W_{j,k}  -\int_{\tau}^{t}\sum_{j=1}^{\infty} jW_{j,j},\label{jwjj}\\
\sum_{j=1}^{\infty}c_j(t) - \sum_{j=1}^{\infty} c_j(\tau)&=
-\frac{1}{2}\int_{\tau}^{t}\sum_{j=1}^{\infty}\sum_{k=1}^{\infty} W_{j,k}  -\frac{1}{2}\int_{\tau}^{t}\sum_{j=1}^{\infty} W_{j,j}.\label{varc}
\end{align}
\end{coro}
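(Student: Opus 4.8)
The plan is to derive all three assertions by specializing the weak moment identities \eqref{limSSS} and \eqref{weakmoment} of the preceding proposition to the sequences $g_j\equiv j$ and $g_j\equiv 1$, and then rearranging the resulting double series by means of the symmetry $W_{j,k}=W_{k,j}$.

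For the density nonincreasing property I would use \eqref{limSSS} with $m=1$ and $g_j\equiv j$; this is admissible because $|g_j|=j\leqs j$, and only the hypothesis $a_{j,k}\leqs Kjk$ is needed. For this choice $g_j-g_{j-k}=k\geqs 0$ on $S_1$, $g_j=j\geqs 0$ on $S_2$, and $W_{j,k}\geqs 0$, so the integrand on the right-hand side of \eqref{limSSS} is nonnegative; hence that right-hand side, being minus a limit of nonnegative integrals, is $\leqs 0$. Thus $\sum_{j\geqs 1}jc_j(t)\leqs\sum_{j\geqs 1}jc_j(\tau)$ whenever $\tau\leqs t$, i.e.\ $\norm{c(t)}_1\leqs\norm{c(\tau)}_1$.

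For the two identities I would invoke \eqref{weakmoment} with $m=1$ under the stronger hypothesis $a_{j,k}\leqs K(jk)^\beta$, $0\leqs\beta\leqs\frac{1}{2}$. To obtain \eqref{jwjj} take $g_j\equiv j$: it satisfies $|g_j|\leqs j$ and $|g_j-g_k|=|j-k|$, so the Lipschitz condition holds with $M=1$. Here $S_1(1,\infty)=\{(j,k):1\leqs k<j\}$, $S_2(1,\infty)=\{(j,k):k\geqs j\}$, with $g_j-g_{j-k}=k$ on $S_1$ and $g_j=j$ on $S_2$. Swapping the dummy indices $(j,k)\mapsto(k,j)$ and using $W_{k,j}=W_{j,k}$ converts $\sum_{S_1(1,\infty)}kW_{j,k}$ into $\sum_{j\geqs 1}\sum_{k\geqs j+1}jW_{j,k}$, while splitting $S_2(1,\infty)$ along its diagonal gives $\sum_{S_2(1,\infty)}jW_{j,k}=\sum_{j\geqs 1}jW_{j,j}+\sum_{j\geqs 1}\sum_{k\geqs j+1}jW_{j,k}$; adding these two contributions produces the coefficient $2$ in \eqref{jwjj}. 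For \eqref{varc} take $g_j\equiv 1$, which satisfies $|g_j|\leqs j$ and, trivially, $|g_j-g_k|\leqs M|j-k|$; then $g_j-g_{j-k}=0$ kills the $S_1$ term and $\sum_{S_2(1,\infty)}W_{j,k}=\sum_{j\geqs 1}W_{j,j}+\sum_{j\geqs 1}\sum_{k\geqs j+1}W_{j,k}$. One concludes by the symmetric bookkeeping identity $\sum_{j,k\geqs 1}W_{j,k}=\sum_{j\geqs 1}W_{j,j}+2\sum_{j\geqs 1}\sum_{k\geqs j+1}W_{j,k}$, which recasts $-\sum_{j}W_{j,j}-\sum_{j}\sum_{k>j}W_{j,k}$ as $-\frac{1}{2}\sum_{j,k}W_{j,k}-\frac{1}{2}\sum_{j}W_{j,j}$, i.e.\ the right-hand side of \eqref{varc}.

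All the interchanges and splittings of series above are justified by absolute (in fact nonnegative-term) convergence, together with the a priori bounds of the form $\sum_{S_2}jW_{j,k}\leqs K\norm{c}_1^2$, $\sum_{S_1}kW_{j,k}\leqs K\norm{c}_1^2$ already recorded in \eqref{S2gj}--\eqref{S1gj} (which is precisely where the restriction $\beta\leqs\frac{1}{2}$ is used), so beyond these estimates nothing but routine bookkeeping is required once \eqref{weakmoment} is at hand. The one point that deserves care — and the step I would flag as the main obstacle — is checking at the outset that each chosen sequence $g_j$ really meets the admissibility hypotheses of the proposition ($|g_j|\leqs j$, and, in the second part, the global Lipschitz bound $|g_j-g_k|\leqs M|j-k|$), so that \eqref{limSSS} and \eqref{weakmoment} genuinely apply to it; everything else is the combinatorics of the regions $S_1,S_2$ and the symmetry of $W$.
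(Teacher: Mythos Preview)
Your proposal is correct and follows exactly the approach indicated in the paper, which simply states that \eqref{jwjj} and \eqref{varc} follow from \eqref{weakmoment} by taking $g_j=j$ and $g_j=1$. You have merely supplied the details the paper omits: the verification that these $g_j$ satisfy the hypotheses of the proposition, the use of \eqref{limSSS} for the density-nonincreasing claim, and the symmetric rearrangements over $S_1$ and $S_2$ needed to reach the stated forms.
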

Equations \eqref{jwjj} and \eqref{varc} follow from \eqref{weakmoment} considering \(g_j =j \) and \(g_j =1 \).

An interesting  particular case concerns the evolution of the number of clusters of odd size, that we will consider in Section~9.

%========================== Corollary:  odd clusters  ===========================================

\begin{coro}
Choosing 
\(
g_{j}=\delta_{j,\text{\emph{odd}}}
\)
then
\begin{equation}
\sum_{\substack{ j=1\\j \text{ \emph{odd}}}}^{\infty}c_j(t) - 
\sum_{\substack{ j=1\\j \text{ \emph{odd}}}} ^{\infty}c_j(\tau)=-\int_{\tau}^{t}\sum_{\substack{ j=1\\j \text{ \emph{odd}}}}^{\infty}
\sum_{\substack{ k=1\\k \text{ \emph{odd}}}}^{\infty}
W_{j,k}.\label{oddweak}
\end{equation}
\end{coro}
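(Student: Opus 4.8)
The plan is to derive \eqref{oddweak} from the weak moment equation \eqref{weakmoment} by a careful choice of the sequence $(g_j)$. Since the coagulation rule sends a $j$-cluster and a $k$-cluster to a $|j-k|$-cluster, parity behaves additively: $|j-k|$ is odd precisely when exactly one of $j,k$ is odd. This suggests that tracking the odd-size population should reveal that odd clusters are lost only in reactions between two odd clusters (whose product has even size), while a reaction between an odd and an even cluster merely moves an odd cluster from one index to another, and a reaction between two even clusters is irrelevant. The indicator $g_j=\delta_{j,\text{odd}}$ is the natural test sequence, but it is bounded (hence $|g_j|\leqslant j$) and satisfies $|g_j-g_k|\leqslant 1\leqslant |j-k|$ whenever $j\ne k$, so $(g_j)$ meets the hypotheses $|g_j|\leqslant j$ and $|g_j-g_k|\leqslant M|j-k|$ with $M=1$. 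Thus, provided the coefficient bound $a_{j,k}\leqslant K(jk)^\beta$ with $0\leqslant\beta\leqslant\frac12$ is in force (as it implicitly is, this corollary sitting under the previous one), \eqref{weakmoment} applies verbatim.

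First I would apply \eqref{weakmoment} with $m=1$ and $g_j=\delta_{j,\text{odd}}$, so that the left-hand side is exactly $\sum_{j\text{ odd}}c_j(t)-\sum_{j\text{ odd}}c_j(\tau)$, and the right-hand side is $-\int_\tau^t\sum_{S_1(1,\infty)}(g_j-g_{j-k})W_{j,k}-\int_\tau^t\sum_{S_2(1,\infty)}g_jW_{j,k}$. With $m=1$ the region $S_1(1,\infty)$ is $\{(j,k):j\geqslant 2,\ 1\leqslant k\leqslant j-1\}$ and $S_2(1,\infty)$ is $\{(j,k):j\geqslant 1,\ k\geqslant j\}$; together these exhaust all of $\Nb^2$ with $(j,k)$ and the "reflected" index $j-k$ behaving sensibly, and in fact $S_1\cup S_2$ covers every pair $(j,k)$ exactly once when one also accounts for the symmetry $W_{j,k}=W_{k,j}$. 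The key step is then to evaluate the integrand pair by pair. For $(j,k)\in S_2$, i.e. $k\geqslant j$: the term $g_jW_{j,k}=\delta_{j,\text{odd}}W_{j,k}$ contributes only when $j$ is odd. For $(j,k)\in S_1$, i.e. $k\leqslant j-1$ so the product index is $j-k\geqslant 1$: the term is $(\delta_{j,\text{odd}}-\delta_{j-k,\text{odd}})W_{j,k}$, which is nonzero only when $j$ and $j-k$ have opposite parity, i.e. exactly when $k$ is odd, and in that case it equals $\pm W_{j,k}$ according to whether $j$ is odd or even.

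The combinatorial heart of the argument is to reorganize these two sums into a single sum over all unordered pairs. Using $W_{j,k}=W_{k,j}$, I would rewrite $\int_\tau^t\sum_{S_2(1,\infty)}g_jW_{j,k}$, summed over $k\geqslant j$, together with the half of $S_1$ coming from the complementary ordering; tracking parities, one finds that contributions where one index is even and the other odd cancel between the $S_1$ reflected term $-\delta_{j-k,\text{odd}}W_{j,k}$ (here $k$ odd, $j$ even, so $j-k$ odd) and the direct term $\delta_{j,\text{odd}}W_{j,k}$ with the roles of $j,k$ swapped; what survives is exactly the pairs with both indices odd, each such pair $\{j,k\}$ (including $j=k$) appearing with total weight $+1$ in the appropriate normalization so that the double sum collapses to $\sum_{j\text{ odd}}\sum_{k\text{ odd}}W_{j,k}$ — the double-counting/diagonal bookkeeping being arranged precisely as in the passage from \eqref{weakmoment} to \eqref{varc}. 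Carrying this out gives \eqref{oddweak}. I expect the main obstacle to be exactly this bookkeeping: making the parity-based cancellation between the $S_1$ and $S_2$ contributions rigorous, and getting the factor in front of the surviving both-odd double sum right (in particular handling the diagonal $j=k$ terms and the ordered-versus-unordered counting correctly), all while staying within the regime where \eqref{weakmoment} — rather than merely \eqref{limSSS} — is available, which is why the hypothesis $a_{j,k}\leqslant K(jk)^\beta$, $\beta\leqslant\frac12$, is needed. Everything else (verifying $(g_j)$ satisfies the hypotheses, dominated convergence) is routine given the earlier results.
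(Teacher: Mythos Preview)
Your proposal is correct and follows essentially the same approach as the paper's own proof: both apply \eqref{weakmoment} with $m=1$ and $g_j=\delta_{j,\text{odd}}$, split the $S_1$ and $S_2$ contributions according to the parities of $j$ and $k$, use the symmetry $W_{j,k}=W_{k,j}$ to show that the mixed-parity terms (the $S_2$ contribution with $j$ odd, $k$ even and the $S_1$ contribution with $j$ even, $k$ odd) cancel exactly, and observe that the surviving both-odd contribution over $S_1\cup S_2$ is the full double sum $\sum_{j\text{ odd}}\sum_{k\text{ odd}}W_{j,k}$. Your verification that $(g_j)$ satisfies the hypotheses of \eqref{weakmoment} is explicit where the paper leaves it implicit, but otherwise the arguments coincide.
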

\begin{Proof}
Considering this choice of the sequence \( (g_{j})\) in \eqref{weakmoment} we have, after some rearrangements
\begin{equation*}
\sum_{\substack{ j=1\\j \text{ odd}}}^{\infty}c_j(t) - \sum_{\substack{ j=1\\j \text{ odd}}}^{\infty} c_j(\tau)=
-\int_{\tau}^{t}\sum_{\substack{S_{1}\cup S_{2}\\j \text{ odd}\\k \text{ odd}} } W_{j,k}-\int_{\tau}^{t}\sum_{\substack{ S_{2}\\j \text{ odd}\\k \text{ even}} } W_{j,k}
+\int_{\tau}^{t}\sum_{\substack{S_{1}\\j \text{ even}\\k \text{ odd}}} W_{j,k}.
\end{equation*}
The last two terms cancel out, since using the fact that \( W_{j,k} = W_{k,j}\) we have
\begin{align*}
\sum_{\substack{ S_{2}\\j \text{ odd}\\k \text{ even}} } W_{j,k}&=\sum_{\substack{ j=2\\j \text{ odd}}}^{\infty} \sum_{\substack{ k=1\\k \text{ even}}}^{\infty} W_{j,k}
=\sum_{\substack{ k=1\\k \text{ even}}}^{\infty} \sum_{\substack{ j=k+1\\j \text{ odd}}}^{\infty} W_{j,k}
=\sum_{\substack{ j=1\\j \text{ even}}}^{\infty} \sum_{\substack{ k=j+1\\k \text{ odd}}}^{\infty} W_{j,k},\\
\intertext{and similarly in the \(S_{1}\) region}
\sum_{\substack{S_{1}\\j \text{ even}\\k \text{ odd}}} W_{j,k}&=\sum_{\substack{ j=1\\j \text{ even}}}^{\infty} \sum_{\substack{ k=j\\k \text{ odd}}}^{\infty} W_{j,k}=\sum_{\substack{ j=1\\j \text{ even}}}^{\infty} \sum_{\substack{ k=j+1\\k \text{ odd}}}^{\infty} W_{j,k}, \text{ since \( j \) is even and \( k \) is odd}.
\end{align*}
Since \(\displaystyle{
\sum_{\substack{S_{1}\cup S_{2}\\j \text{ odd}\\k \text{ odd}} } W_{j,k}=\sum_{\substack{ j=1\\j \text{ odd}}}^{\infty} \sum_{\substack{ k=1\\k \text{ odd}}}^{\infty} W_{j,k}}\), 
this concludes the proof.
\end{Proof}

%===============================================================================
%                                                                           Uniqueness
%===============================================================================

\section{A uniqueness result}

We now consider a uniqueness result for (\ref{rbk}). The result is obtained by assuming the initial value problem 
has two solutions and proving they are equal. This will be done, as usual in coagulation problems (see, e.g.,
\cite{BC,BCP}) by appropriate estimates on the solutions and the use of Gronwall's inequality. 
The proof requires conditions on the coagulation coefficients that are
slightly more restrictive than the ones used for the existence result. At the time of writing it is not clear if these
conditions can be significantly relaxed.

\begin{prop}\label{uniqueness}
 Let \(a_{j,k}\leqs K (jk)^{\b}\), with $\b\leqs \frac{1}{2}$. Then, for each \(c_0 \in X_{1}^{+} \) there is 
 one and only one density nonincreasing solution in
 \([0,T) \) such that \(c(0)=c_0 \).
\end{prop}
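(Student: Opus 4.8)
The plan is to deduce existence from the results already established and to prove uniqueness by a Gronwall estimate on a suitably weighted distance between two solutions. Since $a_{j,k}\leqs K(jk)^{\b}\leqs Kjk$, Theorem~\ref{theoexist} together with Corollary~\ref{admis} provides a solution on $[0,+\infty)$, and, as shown above (take $g_j\equiv j$ in the moments' equation), \emph{every} solution of (\ref{rbk}) is density nonincreasing; so only uniqueness remains. Let $c$ and $d$ be two density nonincreasing solutions on $[0,T)$ with $c(0)=d(0)=c_0$, and set $x_j:=c_j-d_j$, so $x_j(0)=0$. By Remark~\ref{rem1} each $x_j$ is absolutely continuous, hence $\frac{d}{dt}\abs{x_j}=\sgn(x_j)\dot{x}_j$ for a.e.\ $t$; subtracting the two copies of (\ref{rbk}) and using the algebraic identity $c_pc_q-d_pd_q=x_pc_q+d_px_q$, this quantity equals, a.e.\ in $t$,
\[
\sgn(x_j)\sum_{k}a_{j+k,k}\bigl(x_{j+k}c_k+d_{j+k}x_k\bigr)-\sum_k a_{j,k}c_k\abs{x_j}-\sgn(x_j)\sum_k a_{j,k}d_jx_k ,
\]
in which the middle term, $-\sum_k a_{j,k}c_k\abs{x_j}\leqs 0$, is dissipative.

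Fix $\gamma\in[\b,1-\b]$; this interval is nonempty \emph{precisely} because $\b\leqs\tfrac12$ (for instance $\gamma=\tfrac12$ always works). Multiply the expression above by $j^{\gamma}$, sum over $j\in\Nb$, and integrate over $[0,t]$, $t\in[0,T)$; the bounds $\|c(s)\|_1,\|d(s)\|_1\leqs\|c_0\|_1$ and $a_{j,k}\leqs K(jk)^{\b}$ make every double series absolutely convergent and justify Fubini's theorem. Bounding each $\sgn(x_j)(\pm x_\ell)$ by $\abs{x_\ell}$, re-indexing the gain sums by $i=j+k$, and noting that the $d_{j+k}$-contribution of the gain and the $d_j$-contribution of the loss are each $\leqs\sum_i\sum_k i^{\gamma}a_{i,k}d_i\abs{x_k}$, one obtains (with all functions evaluated at $s$)
\[
\sum_j j^{\gamma}\abs{x_j(t)}\leqs\int_0^t\Bigl[\sum_i\sum_{k<i}(i-k)^{\gamma}a_{i,k}c_k\abs{x_i}-\sum_i i^{\gamma}\sum_k a_{i,k}c_k\abs{x_i}+2\sum_i\sum_k i^{\gamma}a_{i,k}d_i\abs{x_k}\Bigr]ds .
\]
Because $(i-k)^{\gamma}\leqs i^{\gamma}$, the first two sums in the bracket combine into a nonpositive quantity and cancel; the last one, by $a_{i,k}\leqs K(ik)^{\b}$, is at most $2K\sum_i i^{\gamma+\b}d_i\sum_k k^{\b}\abs{x_k}=2K\|d(s)\|_{\gamma+\b}\|x(s)\|_{\b}$. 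Since $\gamma+\b\leqs 1$ and $\b\leqs\gamma$, we get $\|d(s)\|_{\gamma+\b}\leqs\|c_0\|_1$ and $\|x(s)\|_{\b}\leqs\|x(s)\|_{\gamma}=\sum_j j^{\gamma}\abs{x_j(s)}$. Thus $\phi(t):=\sum_j j^{\gamma}\abs{x_j(t)}$ satisfies $\phi(t)\leqs 2K\|c_0\|_1\int_0^t\phi(s)\,ds$ with $\phi(0)=0$, and Gronwall's inequality forces $\phi\equiv 0$, i.e.\ $c=d$ on $[0,T)$.

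The step I expect to be the only real obstacle is the cancellation of the re-indexed gain term $\sum_i\sum_{k<i}(i-k)^{\gamma}a_{i,k}c_k\abs{x_i}$ against the dissipative loss term. Estimated by itself this term carries a factor $i^{\gamma+\b}$ on the ``large-cluster'' concentration $c_i$, and so would require a moment of $c$ of order exceeding $1$, which is not available; it must instead be absorbed \emph{exactly}, which is legitimate for every $\gamma\geqs 0$ since $(i-k)^{\gamma}\leqs i^{\gamma}$. What survives after this cancellation is a single term carrying a moment of $d$ of order $\gamma+\b$, and the hypothesis $\b\leqs\tfrac12$ is precisely what permits choosing $\gamma$ with $\b\leqs\gamma$ and $\gamma+\b\leqs 1$ at the same time, so that this term is dominated by $\|c_0\|_1\,\phi$. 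Everything else is routine: the absolute convergence of the double series and the Fubini interchange follow from the \emph{a priori} moment estimates in the proof of Theorem~\ref{theoexist}, the a.e.\ chain rule for $\abs{x_j}$ is standard for absolutely continuous functions, and the last step is the integral form of Gronwall's lemma.
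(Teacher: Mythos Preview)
Your proof is correct and follows essentially the same strategy as the paper: a Gronwall estimate on $\sum_j j^{\gamma}\abs{x_j}$ with $\gamma\in[\b,1-\b]$, exploiting that one piece of the gain term is dominated by the dissipative loss so that only a term controlled by a moment of order $\gamma+\b\leqs 1$ survives. The only notable difference is organisational: the paper works with the truncated moments' equation over $S_1,S_2,S_3$ and passes to the limit $n\to\infty$ (handling the $S_3$ tail via Lemma~\ref{momlem}), whereas you work directly with the infinite sums and justify the interchange by Fubini, which yields the same conclusion with a slightly better constant.
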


\begin{proof}
Suppose the initial value problem for (\ref{rbk}) with the initial condition $c(0)=c_0\in X_1^+$ has 
two density nonincreasing solutions, $c$ and $d$.\,\;%,
Let \( x(t):= c(t)-d(t)\) and \(M_{j,k}:= a_{j,k}(c_jc_k- d_jd_k)= a_{j,k}(c_jx_k+d_kx_j).\)
We shall prove that \(c\equiv d\) by establishing that, for some \(\alpha\), 
the sum $\sum_{j=1}^{\infty} j^{\alpha}|x_j|$
is identically zero. This will be achieved by deriving an inequality for this quantity and applying Gronwall's inequality.

So, let us consider   Proposition~\ref{prop2-2} with  \( m=1 \). From the above definitions we get
\[
 \sum_{j=1}^{n}g_jx_j=\int_{0}^{t}\biggl( \sum_{S_1}(g_{j-k}-g_j)M_{j,k}- \sum_{S_2}g_jM_{j,k}+\sum_{S_3}g_{j-k}M_{j,k}\biggr)ds.
\]
For each \( t\in [0,T) \) consider \( g_{j}=j^{\a}\sgn (x_{j}) \) for some $\a\geqs \b$ such that $\a+\b\leqs 1.$
We now estimate the sums over each of the $S_j$. For \( (j,k)\in S_{1} \) we get
\begin{eqnarray*}
(g_{j-k}-g_j)x_k &=&\bigl(
(j-k)^{\a}\sgn(x_{j-k})-j^{\a}\sgn(x_{j})
\bigr)x_{k}\\
&=&\bigl(
(j-k)^{\a}\sgn(x_{j-k})-j^{\a}\sgn(x_{j})
\bigr)\sgn(x_{k})\abs{x_{k}}\\
&=&\bigl(
(j-k)^{\a}\sgn(x_{j-k}x_{k})-j^{\a}\sgn(x_{j}x_{k})
\bigr)\abs{x_{k}}\\
&\leqs& ((j-k)^{\a}+j^{\a})\abs{x_{k}}\\
&\leqs& 2j^{\a}\abs{x_{k}},
\end{eqnarray*}
and, by a similar computation, 
\((g_{j-k}-g_j)x_j 
=
\bigl((j-k)^{\a}\sgn(x_{j-k})-j^{\a}\sgn(x_{j})
\bigr)x_{j}
= 
\bigl((j-k)^{\a}\sgn(x_{j-k}x_{j})-j^{\a}
\bigr)\abs{x_{j}}\;\,\leqs\;\, 0.
\)
Using these bounds and the assumptions on $\a$ and $\b$ we can estimate the sum over \(S_1\) as
\begin{eqnarray*}
\int_{0}^{t}\sum_{S_1}(g_{j-k}-g_j)M_{j,k} ds
&\leqs & 2K \int_{0}^{t}\sum_{j=2}^{n}\sum_{k=1}^{j-1}j^{\a}(jk)^{\b}c_{j}\abs{x_{k}}ds\\
&\leqs & 2K\norm{c_{0}}_1\int_{0}^{t}\sum_{k=1}^{n}k^{\b}\abs{x_{k}}ds.
\end{eqnarray*}
For \( (j,k)\in S_{2} \) we have \(
-j^{\a}\sgn (x_{j})x_{k}\leqs j^{\a}\abs{x_{k}},\) and 
\(j^{\a}\sgn (x_{j})x_{j} = j^{\a}\abs{x_{j}},\)
from which it follows that
\begin{eqnarray*} 
-\int_{0}^{t}\sum_{S_2}g_jM_{j,k} ds & \leqs 
& K\int_{0}^{t}\sum_{j=1}^{n} \sum_{k=j}^{\infty}j^{\a}(jk)^{\b}(c_{j}\abs{x_{k}}-d_{k}\abs{x_{j}})ds\\
&\leqs & K\norm{c_{0}}_1\int_{0}^{t}\sum_{k=1}^{\infty}k^{\b}\abs{x_{k}}ds.
\end{eqnarray*}
Finally, for \( (j,k)\in S_{3} \) we have the estimates 
\(
(j-k)^{\a}\sgn (x_{j-k})x_{k}\leqs n^{a}\abs{x_{k}},\) and
\((j-k)^{\a}\sgn (x_{j})x_{j}= n^{\alpha}\abs{x_{j}},
\)
from which it follows 
\begin{eqnarray}
\int_{0}^{t}\sum_{S_{3}}g_{j-k}M_{j,k}ds&\leqs &Kn^{\a}\int_{0}^{t}\sum_{j=n+1}^{\infty} 
\sum_{k=j-n}^{j-1}(jk)^{\b}(c_{j}\abs{x_{k}}+d_{k}\abs{x_{j}})ds\nonumber\\
&\leqs &Kn^{\a}\int_{0}^{t}\sum_{j=n+1}^{\infty}j^{\b}c_{j} \sum_{k=j-n}^{j-1}k^{\b}\abs{x_{k}}ds \; + \label{int1} \\
&\phantom{+}&+\;Kn^{\a}\int_{0}^{t}\sum_{j=n+1}^{\infty} j^{\b}\abs{x_{j}}\sum_{k=j-n}^{j-1}k^{\b}d_{k}ds.\label{int2}
\end{eqnarray}
For the double sum in (\ref{int1}),  using $\a+\b\leqs 1$ we conclude that
\[
\sum_{j=n+1}^{\infty}j^{\b}c_{j} \sum_{k=j-n}^{j-1}k^{\b}\abs{x_{k}}
\leqs \frac{1}{n^{\a}}\sum_{j=n+1}^{\infty}jc_{j} \sum_{k=j-n}^{j-1}k^{\b}\abs{x_{k}}
\leqs \frac{\norm{c_{0}}_1}{n^{\a}} \sum_{k=1}^{\infty} k^{\b}\abs{x_{k}}.
\]
For the double sum in (\ref{int2}) we have, again using $\a+\b\leqs 1$,
\[
\sum_{j=n+1}^{\infty} j^{\b}\abs{x_{j}}\sum_{k=j-n}^{j-1}k^{\b}d_{k} \leqs 
\frac{\norm{c_{0}}_1}{n^{\a}}\biggl(\sum_{j=n+1}^{\infty} j^{\b}c_{j}+\sum_{j=n+1}^{\infty} j^{\b}d_{j}\biggr),
\]
and therefore
\begin{multline*}
\int_{0}^{t}\sum_{S_{3}}g_{j-k}M_{j,k}ds\leqs K\norm{c_{0}}_1 \int_{0}^{t}\!\sum_{k=1}^{\infty} k^{\b}\abs{x_{k}}ds\; + \\ +
 K\norm{c_{0}}_1 \int_{0}^{t}\!\biggl(\sum_{j=n+1}^{\infty} j^{\b}c_{j}+\sum_{j=n+1}^{\infty} j^{\b}d_{j}\biggr)ds.
\end{multline*}
Combining the estimates on the three regions we get
\begin{align}
\sum_{j=1}^{n}j^{\a}\abs{x_{j}}
 &\leqs 4K\norm{c_{0}}_1\int_{0}^{t}\sum_{j=1}^{n}j^{\b}\abs{x_{j}} +
 K\norm{c_{0}}_1\int_{0}^{t}\biggl(\sum_{j=n+1}^{\infty}j^{\b}c_j+\sum_{j=n+1}^{\infty}j^{\b}d_j\biggr).\label{forgron}
\end{align}
By the definition of solution and the assumption $\b\leqs 1/2 <1$,
\[
\sum_{j=n+1}^{\infty}j^{\b}c_j \leqs\sum_{j=n+1}^{\infty}jc_j\to 0, \text{ as } n\to \infty, \text{ pointwisely,}
\]
and since \(c(\cdot) \) is a density nonincreasing solution,
\[
 \sum_{j=n+1}^{\infty}j^{\b}c_j(t) \leqs \sum_{j=n+1}^{\infty}jc_j(t) \leqs 
\sum_{j=1}^{\infty}jc_j(0)  = \norm{c_{0}}_1.
\]
Thus, by the dominated convergence theorem
\[
 \int_{0}^{t} \sum_{j=n+1}^{\infty}j^{\b}c_jds\to 0, \text{ as } n\to \infty,
\]
the same being valid for \(\sum_{j=n+1}^{\infty}j^{\beta}d_j\).

Therefore, letting \( n\to \infty\) in (\ref{forgron})  and using the assumption $\a\geqs \b$ in the right-hand side we obtain
\[
 \sum_{j=1}^{\infty}j^{\a}\abs{x_{j}}\leqs 4K\norm{c_{0}}_1\int_{0}^{t}\sum_{j=1}^{\infty}j^{\a}\abs{x_j}ds.
\]
Since \(x_j(0)=0 \), by Gronwall inequality we conclude
\[
  \sum_{j=1}^{\infty} j^{\a}\abs{x_j}=0, \qquad\forall t\in [0,T),
\]
and so 
\(
 x_j=0, 
\)
for all $t\in [0,T)$ and $j\in\Nb,$ thus proving uniqueness.\end{proof}

\begin{remark}\label{rembeta}
 Observe that, if $\b>1/2$, it is not possible to find numbers $\alpha$ such that $\a+\b \leqs 1$ and $\a\geqs\b$. 
It is this elementary technical reason that forced us to consider $\beta\leqs 1/2$ in the uniqueness
result in Proposition~\ref{uniqueness}, since in this case such numbers $\a$ obviously exist. 
It is  not presently clear if a uniqueness result is true in more 
general situations.
\end{remark}

%====================================================================================
%                                                  Differentiability and continuous dependence
%====================================================================================

\section{Differentiability and continuous dependence}
%
%================================ Differentiability =======================================
%
In Theorem \ref{theoexist} and Corollary \ref{admis} we proved that with the hypothesis that $a_{j,k}\leqs Kjk$, 
for each initial condition $c_{0}\in X^+_{1}$, there exists at least one admissible solution. With the stronger 
assumption that $a_{j,k}\leqs  K(jk)^{1/2}$, Theorem \ref{uniqueness} implies that there is a unique 
solution defined in $\left[0,\infty\right)$ and so, it has to be an admissible solution. We now address the 
issue of differentiability of such solutions.
\begin{theorem} If there is $K>0$ such that, for all $j,k\in\Nb$, $a_{j,k}\leqs Kjk$, and $c(\cdot)=(c_{j}(\cdot))$ 
is an admissible solution then, the functions $t\mapsto c_{j}(t)$, for $j\in\Nb$, and $t\mapsto \sum_{j=1}^{\infty}c_{j}(t)$ are 
continuously differentiable. Moreover, (\ref{rbk}) is satisfied for all $t\in\left[0,\infty\right)$ and
\begin{equation}\label{diffsum}
\frac{d}{dt}\sum_{j=1}^{\infty}c_{j}(t)=-\sum_{k=1}^{\infty}\sum_{j=1}^{k}a_{j,k}c_{j}(t)c_{k}(t),\quad\forall t\left[0,+\infty\right)\,.
\end{equation}
\end{theorem}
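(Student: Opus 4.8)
The plan is to establish the two differentiability claims by showing that the relevant right-hand sides are continuous functions of $t$, and then invoking the fundamental theorem of calculus together with the integral representations already available. Fix $j\in\Nb$. Since $c(\cdot)$ is an admissible solution it satisfies condition (iii) of Definition~\ref{defsol}, so $c_j(t)=c_j(0)+\int_0^t F_j(s)\,ds$ with $F_j(s)=\sum_{k=1}^{\infty}a_{j+k,k}c_{j+k}(s)c_k(s)-\sum_{k=1}^{\infty}a_{j,k}c_j(s)c_k(s)$. Thus it suffices to prove that $s\mapsto F_j(s)$ is continuous on $[0,\infty)$; then $c_j$ is $C^1$ with $\dot c_j=F_j$, which is exactly (\ref{rbk}) holding for all $t$. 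To get continuity of $F_j$, first observe that by the estimate derived in the existence proof (the bound $\sum_{k=1}^{\infty}a_{j,k}c_jc_k\leqs K\|c_0\|_1^2$, obtained there from (\ref{normp}) and valid for the limit solution), each of the two series defining $F_j$ is dominated uniformly in $s$ by a convergent series whose tails, namely $\sum_{k\geqs m}a_{j,k}c_j(s)c_k(s)\leqs Kjc_j(s)\sum_{k\geqs m}kc_k(s)\leqs Kj\|c_0\|_1\sum_{k\geqs m}kc_k(s)$, are small: since $c(s)\in X_1^+$ with $\|c(s)\|_1\leqs\|c_0\|_1$, and using the uniform-in-$N$ tail bound from Corollary~\ref{admis} one sees that $\sup_{s\in[0,T]}\sum_{k\geqs m}kc_k(s)\to 0$ as $m\to\infty$. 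Hence the partial sums converge to $F_j$ uniformly on each $[0,T]$; since each partial sum is a finite combination of products of the continuous functions $c_i$, $F_j$ is continuous on $[0,T]$, and $T$ is arbitrary.

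For the sum $S(t):=\sum_{j=1}^{\infty}c_j(t)$, the strategy is to use the moment-type identity (\ref{varc}) from the corollary to Proposition~\ref{prop2-2} — or equivalently, apply the argument behind Lemma~\ref{momlem} with $g_j\equiv 1$ — which gives, for $\tau\leqs t$,
\[
S(t)-S(\tau)=-\int_{\tau}^{t}\sum_{S_2(1,\infty)}W_{j,k}(c(s))\,ds
=-\int_{\tau}^{t}\sum_{k=1}^{\infty}\sum_{j=1}^{k}a_{j,k}c_j(s)c_k(s)\,ds,
\]
after rewriting the index set $S_2(1,\infty)=\{(j,k):k\geqs j\}$ in the stated form. (One must check that $S$ itself is well-defined and finite, which follows from $\sum_{j}c_j(t)\leqs\sum_j jc_j(t)\leqs\|c_0\|_1$.) So it remains to show that $G(s):=\sum_{k=1}^{\infty}\sum_{j=1}^{k}a_{j,k}c_j(s)c_k(s)$ is continuous on $[0,\infty)$, after which the fundamental theorem of calculus gives that $S$ is $C^1$ with $\dot S=-G$, which is precisely (\ref{diffsum}).

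Continuity of $G$ is proved the same way as for $F_j$: the full double sum is bounded, via $a_{j,k}\leqs Kjk$ and (\ref{est1norm}), by $K\bigl(\sum_j jc_j(s)\bigr)^2\leqs K\|c_0\|_1^2$, and more importantly its tail beyond level $m$, say $\sum_{k\geqs m}\sum_{j=1}^{k}a_{j,k}c_jc_k\leqs K\bigl(\sum_j jc_j(s)\bigr)\sum_{k\geqs m}kc_k(s)\leqs K\|c_0\|_1\sum_{k\geqs m}kc_k(s)$, is uniformly small on $[0,T]$ by the uniform tail control from Corollary~\ref{admis}. Therefore the finite partial sums converge to $G$ uniformly on compact time intervals, and each partial sum is continuous in $s$ (finite sums of products of the continuous $c_i$), so $G$ is continuous. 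The main obstacle — and the only genuinely nontrivial point — is establishing the uniform-in-$t$ smallness of the series tails on compact time intervals; everything else is bookkeeping with the a priori bound (\ref{normp}) and dominated convergence. I would lean on the inequality $\sum_{j\geqs m}j|c_j(t)-c_j^N(t)|\leqs 2\sum_{j\geqs m}jc_{0j}$ from the proof of Corollary~\ref{admis}, combined with $\sum_{j\geqs m}jc_j^N(t)\leqs\sum_{j\geqs m}jc_{0j}$ from (\ref{normp}), to conclude $\sup_{t\in[0,T]}\sum_{j\geqs m}jc_j(t)\leqs 3\sum_{j\geqs m}jc_{0j}\to 0$, which is exactly the uniform tail bound needed in both parts.
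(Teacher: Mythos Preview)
Your proposal is correct and follows essentially the same approach as the paper's proof: use the uniform tail control of $\sum_{j\geqs m}jc_j(t)$ on compact time intervals (which the paper phrases as ``$\sum_{j=1}^{\infty}jc_j(t)$ is uniformly convergent in compact subsets'') to deduce that the series defining the right-hand side of (\ref{rbk}) and the double series $G(s)$ converge uniformly to continuous functions, and then apply the fundamental theorem of calculus to the integral representations coming from Definition~\ref{defsol}(iii) and from the proof of Lemma~\ref{momlem}. One small remark: you cite (\ref{varc}) for the integral identity for $S(t)$, but that corollary is stated under the stronger hypothesis $a_{j,k}\leqs K(jk)^\beta$ with $\beta\leqs\tfrac12$; your parenthetical ``or equivalently, apply the argument behind Lemma~\ref{momlem} with $g_j\equiv 1$'' is the correct reference under the present hypothesis $a_{j,k}\leqs Kjk$, and is exactly what the paper invokes. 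Also, your detour through the factor $3$ is unnecessary: the same limiting argument that gives (\ref{est1norm}) yields directly $\sum_{j\geqs m}jc_j(t)\leqs\sum_{j\geqs m}jc_{0j}$, uniformly in $t$.
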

\begin{proof}
If a solution is admissible, we can use (\ref{normp}), like in Theorem \ref{theoexist} and Corollary \ref{admis}, 
to conclude that $\sum_{j=1}^{\infty}jc_{j}(t)$ is uniformly convergent in compact subsets of $\left[0,\infty\right)$. 
This, together with the assumption on the coefficients $a_{j,k}$ and the continuity of $c_{j}(\cdot)$, for $j\in\Nb$, 
allows us to prove the continuity of the right-hand side of (\ref{rbk}) in $t$, thus proving the continuous differentiability 
of each $c_{j}(\cdot)$, and the fact that (\ref{rbk}) is satisfied by $c(t)$, for all $t\in\left[0,+\infty\right)$. 

From the proof of Lemma \ref{momlem}, we already know that
\[
 \sum_{j=1}^{\infty}c_j(t) - \sum_{j=1}^{\infty} c_j(\tau)=-\int_{\tau}^{t}\sum_{k=1}^{\infty}\sum_{j=1}^{k}a_{j,k}c_{j}(s)c_{k}(s)\,ds\,.
\]
Since the double series of continuous functions in the r.h.s is uniformly convergent in each compact interval, the result follows.
\end{proof}

With an extra condition on the kinetic coefficients we can say more about the differentiability of the  moments for any solution:
\begin{prop}
If there is $K>0$ such that, for all $j,k\in\Nb$, $a_{j,k}\leqs K(jk)^{\beta}$, with $\beta\leqs \frac{1}{2}$, and 
$c(\cdot)=(c_{j}(\cdot))$ is a solution of (\ref{rbk}), and $(g_{j})$ is a nonegative sequence such that
$0\leqs g_{j}\leqs j$, and, for some positive constant $M$, $|g_{j}-g_{j-k}|\leqs Mk$ for $1\leqs k\leqs j$, then  for $m\in\Nb$, $t\mapsto\sum_{j=m}^\infty g_{j}c_{j}(t)$ is continuously differentiable and moreover, for $t\geqs 0$,
\begin{equation}\label{difffmoments}
\frac{d}{dt}\sum_{j=m}^\infty g_{j}c_{j}(t)=-\sum_{S_1(m,\infty)}(g_j -g_{j-k})W_{j,k} (c(t))-\sum_{S_2(m,\infty)}g_jW_{j,k}(c(t)) \,.
\end{equation} 
In particular the differential versions of (\ref{jwjj}), (\ref{varc}) and (\ref{oddweak}) hold true.
\end{prop}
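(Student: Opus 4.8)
The plan is to mimic the structure of the proof of the theorem on the differentiability of $\sum c_j$, using the weak moment identity \eqref{weakmoment} as the starting point. First I would observe that the hypotheses on $(g_j)$ — namely $0\leqs g_j\leqs j$ and $|g_j-g_{j-k}|\leqs Mk$ — are precisely what is needed to invoke the second part of the Proposition preceding this one (with the constant $M$ there playing the role of $M$ here, and noting $|g_j-g_k|\leqs M|j-k|$ follows from the hypothesis by summing telescopically, or can be taken directly), so that the integral identity
\[
 \sum_{j=m}^{\infty}g_jc_j(t) - \sum_{j=m}^{\infty} g_jc_j(\tau)=-\int_{\tau}^{t}
 \sum_{S_{1}(m,\infty)}(g_j - g_{j-k}) W_{j,k} -\int_{\tau}^{t}\sum_{S_{2}(m,\infty)} g_jW_{j,k}
\]
holds for all $\tau\leqs t$ in $[0,T)$.

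The core of the argument is then to show that the integrand
\[
 t\mapsto \sum_{S_1(m,\infty)}(g_j -g_{j-k})W_{j,k}(c(t))+\sum_{S_2(m,\infty)}g_jW_{j,k}(c(t))
\]
is a continuous function of $t$; once that is established, the fundamental theorem of calculus applied to the integral identity gives that $t\mapsto\sum_{j=m}^\infty g_jc_j(t)$ is $C^1$ with derivative \eqref{difffmoments}. To get continuity of the integrand I would argue that the two double series converge uniformly on compact time intervals. Using $|g_j-g_{j-k}|\leqs Mk$ on $S_1$ (where $k<j$) and $0\leqs g_j\leqs j$ on $S_2$ (where $j\leqs k$), together with $a_{j,k}\leqs K(jk)^\beta\leqs Kjk$ — or more sharply the estimates \eqref{S1gj}, \eqref{S2gj} already derived — the tail of each series over $\{j\geqs n\}$ is bounded by a constant times $K\|c(t)\|_1\sum_{j\geqs n}jc_j(t)$, and by condition (i) of Definition~\ref{defsol} together with the density-nonincreasing property (a consequence of $a_{j,k}\leqs Kjk$, via the Corollary), $\|c(t)\|_1$ is bounded and $\sum_{j\geqs n}jc_j(t)\leqs\sum_{j\geqs n}jc_j(0)\cdot(\text{something})$... more carefully, one uses the uniform-in-$t$ smallness of $\sum_{j\geqs n}jc_j(t)$ established exactly as in Corollary~\ref{admis} and the proof of Proposition~\ref{uniqueness} (the dominated-convergence step there). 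Each finite partial sum is continuous in $t$ since the $c_j(\cdot)$ are continuous, and a uniform limit of continuous functions is continuous, giving the claim.

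Finally, for the last sentence of the statement, I would simply check that $g_j=j$, $g_j=1$, and $g_j=\delta_{j,\text{odd}}$ all satisfy the hypotheses $0\leqs g_j\leqs j$ and $|g_j-g_{j-k}|\leqs Mk$ (with $M=1$ in the first two cases, and with a suitable $M$ — in fact $M=1$ again — for the odd indicator, since $|\delta_{j,\text{odd}}-\delta_{j-k,\text{odd}}|\leqs 1\leqs k$), and that the identities \eqref{jwjj}, \eqref{varc}, \eqref{oddweak} are exactly the integrated forms of \eqref{difffmoments} for these choices after the symmetrization rearrangements already performed in the corresponding Corollaries; differentiating those integral identities, whose integrands have now been shown continuous, yields the pointwise differential versions. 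I expect the main obstacle to be the careful verification of the uniform convergence of the $S_1$ and $S_2$ series on compact time intervals — in particular handling the $S_1$ sum, where the index $j-k$ appearing in $g_{j-k}$ ranges over a full triangle, requires the telescoping bound $|g_j-g_{j-k}|\leqs Mk$ rather than a crude $2j$ bound in order to extract the decisive factor $k$ that pairs with $c_k$ to produce $\|c\|_1$; everything else is a routine repackaging of estimates already in the paper.
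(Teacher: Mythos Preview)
Your proposal is correct and follows essentially the same route as the paper: invoke the integral identity \eqref{weakmoment}, use the bounds \eqref{S1gj}--\eqref{S2gj} together with the uniform convergence of $\sum_j jc_j(t)$ to show the double series on the right define continuous functions of $t$, and differentiate. The paper's proof is just a one-sentence pointer to exactly these ingredients; your write-up is more explicit (in particular about why the uniform tail control is available here---via uniqueness and admissibility under the $\beta\leqs\tfrac12$ hypothesis) but the structure is identical.
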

\begin{Proof}
From the estimates \eqref{S2gj} and \eqref{S1gj} taken together with the property of uniform convergence of \(\sum_{j=1}^{\infty}jc_{j}\), 
it is clear that the convergence of the double series in (\ref{weakmoment}) is uniform so that the series in the integrals of the r.h.s. 
of that expression define continuous functions in $t$. This implies our result. 
\end{Proof}

With respect to the continuous dependence relatively to the initial conditions we prove the following partial result:
%
%============================== Proposition: Continuous dependence =============================
%
\begin{prop}
If \(a_{j,k}\leq K(jk)^\beta\) wth \(\beta\leqs \frac{1}{2}\), if \(\alpha+\beta\leqs 1\) and if \(c\) and \(d\) are solutions of \eqref{rbk} 
satisfying \(c(0)=c_{0}\) and 
\(d(0)=d_{0}\) then, for each \(t\geqs 0\), there is a positive \(C(t,\|c_{0}\|_{1})\) such that
\begin{equation}\label{contalpha}
\|c(t)-d(t)\|_{\alpha}\leqs C(t,\|c_{0}\|_{1}) \|c_{0}-d_{0}\|_{\alpha}.
\end{equation}
\end{prop}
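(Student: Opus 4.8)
The plan is to rerun the estimate in the proof of Proposition~\ref{uniqueness} almost verbatim, the only change being that the initial difference $c_{0}-d_{0}$ no longer vanishes, so the resulting integral inequality is inhomogeneous and Gronwall's lemma delivers the bound \eqref{contalpha} rather than forcing the difference to be zero. Put $x(t):=c(t)-d(t)$, $x_{j}:=c_{j}-d_{j}$, and, exactly as before, $M_{j,k}:=a_{j,k}(c_{j}c_{k}-d_{j}d_{k})=a_{j,k}(c_{j}x_{k}+d_{k}x_{j})$. Since $a_{j,k}\leqs K(jk)^{\b}\leqs Kjk$, both $c$ and $d$ are density nonincreasing solutions (the corresponding corollary of Section~4), so $\norm{c(t)}_{1}\leqs\norm{c_{0}}_{1}$ and $\norm{d(t)}_{1}\leqs\norm{d_{0}}_{1}$ for all $t\geqs 0$; moreover $\a\leqs 1-\b\leqs 1$ gives $\norm{x(t)}_{\a}\leqs\norm{c(t)}_{1}+\norm{d(t)}_{1}$, which is finite and bounded on every bounded time interval --- precisely what will be needed to close the Gronwall argument. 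Finally, since both $\abs{M_{j,k}}=a_{j,k}\abs{c_{j}c_{k}-d_{j}d_{k}}$ and the inequality \eqref{contalpha} are symmetric under interchanging $c$ and $d$, I may assume $\norm{d_{0}}_{1}\leqs\norm{c_{0}}_{1}$, so that $\rho:=\norm{c_{0}}_{1}$ dominates $\norm{c(t)}_{1}$ and $\norm{d(t)}_{1}$ for all $t\geqs 0$.

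Then, for fixed $t\geqs 0$ and $n\in\Nb$, I would apply the moments' equation of Proposition~\ref{prop2-2} with $m=1$ --- written for $c$, written for $d$, and subtracted --- to the fixed sequence $g_{j}:=j^{\a}\sgn\!\bigl(x_{j}(t)\bigr)$, where $\b\leqs\a$ and $\a+\b\leqs 1$ (the condition $\b\leqs\a$ being needed exactly as in Proposition~\ref{uniqueness}), obtaining
\[
 \sum_{j=1}^{n}j^{\a}\abs{x_{j}(t)}=\sum_{j=1}^{n}j^{\a}\sgn\!\bigl(x_{j}(t)\bigr)x_{j}(0)+\int_{0}^{t}\!\Bigl(\sum_{S_{1}}(g_{j-k}-g_{j})M_{j,k}-\sum_{S_{2}}g_{j}M_{j,k}+\sum_{S_{3}}g_{j-k}M_{j,k}\Bigr)ds,
\]
where the first term on the right is at most $\sum_{j=1}^{n}j^{\a}\abs{x_{j}(0)}\leqs\norm{c_{0}-d_{0}}_{\a}$; this is the only point where the present computation differs from that of Proposition~\ref{uniqueness}. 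Estimating the three region sums exactly as there --- the relevant sign cancellations, together with $(j-k)^{\a}\leqs j^{\a}$ on $S_{1}$, $(j-k)^{\a}\leqs n^{\a}$ on $S_{3}$, the inequality $(jk)^{\b}j^{\a}=j^{\a+\b}k^{\b}\leqs jk^{\b}$, and $\norm{c(t)}_{1},\norm{d(t)}_{1}\leqs\rho$ --- one arrives at
\[
 \sum_{j=1}^{n}j^{\a}\abs{x_{j}(t)}\leqs\norm{c_{0}-d_{0}}_{\a}+4K\rho\int_{0}^{t}\sum_{j=1}^{n}j^{\b}\abs{x_{j}(s)}ds+K\rho\int_{0}^{t}\Bigl(\sum_{j=n+1}^{\infty}j^{\b}c_{j}(s)+\sum_{j=n+1}^{\infty}j^{\b}d_{j}(s)\Bigr)ds.
\]

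It then remains to let $n\to\infty$. By monotone convergence the middle integral tends to $4K\rho\int_{0}^{t}\sum_{j=1}^{\infty}j^{\b}\abs{x_{j}(s)}ds\leqs 4K\rho\int_{0}^{t}\norm{x(s)}_{\a}ds$, the last inequality using $\b\leqs\a$; and the density nonincreasing property gives the $n$-independent bound $\sum_{j=n+1}^{\infty}j^{\b}c_{j}(s)\leqs\sum_{j}jc_{j}(s)\leqs\rho$ (and likewise for $d$), with the tails tending to $0$ pointwise in $s$, so by dominated convergence the last integral vanishes as $n\to\infty$. One is left with
\[
 \norm{x(t)}_{\a}\leqs\norm{c_{0}-d_{0}}_{\a}+4K\norm{c_{0}}_{1}\int_{0}^{t}\norm{x(s)}_{\a}ds,\qquad t\geqs 0,
\]
and, $\norm{x(\cdot)}_{\a}$ being bounded on bounded intervals, Gronwall's inequality yields \eqref{contalpha} with $C(t,\norm{c_{0}}_{1})=\exp\!\bigl(4K\norm{c_{0}}_{1}t\bigr)$.

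I do not anticipate a genuinely new obstacle, since every individual estimate has already been carried out in the proof of Proposition~\ref{uniqueness}; the points that call for a little care are (i) that the weight sequence $g_{j}=j^{\a}\sgn(x_{j}(t))$ depends on the fixed end-time $t$, which is harmless because Proposition~\ref{prop2-2} holds for an arbitrary fixed sequence $(g_{j})$; (ii) the passage $n\to\infty$, which requires the tails $\sum_{j>n}j^{\b}c_{j}$, $\sum_{j>n}j^{\b}d_{j}$ to be dominated uniformly in $n$ --- furnished for free by the density nonincreasing property, itself automatic here since $a_{j,k}\leqs K(jk)^{\b}\leqs Kjk$; and (iii) the bookkeeping, via the $c\leftrightarrow d$ symmetry, that lets the Gronwall constant be expressed through $\norm{c_{0}}_{1}$ alone rather than through $\max\{\norm{c_{0}}_{1},\norm{d_{0}}_{1}\}$. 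The argument also explains why the statement should carry the hypothesis $\b\leqs\a$: without it the weight $j^{\b}$ appearing on the right-hand side of the final integral inequality cannot be absorbed into the weight $j^{\a}$ on the left, and Gronwall's lemma cannot be applied.
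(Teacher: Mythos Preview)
Your plan is exactly the paper's own proof: redo the estimates of Proposition~\ref{uniqueness} with a nonzero initial difference and then apply Gronwall's inequality, arriving at the same inhomogeneous inequality and the exponential constant $\exp(4K\|c_{0}\|_{1}t)$. The extra care you take in (ii) and (iii) (dominated convergence for the tails, and the $c\leftrightarrow d$ symmetry to express the constant through $\|c_{0}\|_{1}$) is consistent with the paper.

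There is one point where your write-up would not go through as stated. In remark (i) you freeze $g_{j}=j^{\a}\sgn(x_{j}(t))$ at the endpoint and call this ``harmless''. But the two sign cancellations that drive the uniqueness estimates --- namely $(g_{j-k}-g_{j})x_{j}\leqs 0$ on $S_{1}$ and $-g_{j}x_{j}=-j^{\a}|x_{j}|$ on $S_{2}$ --- use $\sgn(x_{j})x_{j}=|x_{j}|$, which holds only if the sign is read at the \emph{running} time $s$ inside the integral. With your frozen choice, the $d_{k}x_{j}$ contribution on $S_{1}$ is no longer nonpositive; it produces an extra term of order $\sum_{j}j^{\a+\b}|x_{j}|\sum_{k<j}k^{\b}d_{k}$, which cannot be absorbed into $\sum_{j}j^{\a}|x_{j}|$ under the stated hypotheses. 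The paper's uniqueness proof (and hence the present one, via ``the same estimates'') implicitly takes $g_{j}(s)=j^{\a}\sgn(x_{j}(s))$; this is justified not through Proposition~\ref{prop2-2} for a fixed sequence but through the absolute continuity of each $|x_{j}|$ (cf.\ Remark~\ref{rem1}), which gives $\frac{d}{ds}|x_{j}|=\sgn(x_{j})\dot{x}_{j}$ a.e.\ and then one sums and integrates. With that reading your argument is complete and identical to the paper's.
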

\begin{Proof}
By writing
 \begin{align*}
 c_j(t)&=c_{0j}+\int_{0}^{t}\left[\sum_{k=1}^{\infty}a_{j+k,k}c_{j+k}(s)c_{k}(s)- 
 \sum_{k=1}^{\infty}a_{j,k}c_{j}(s)c_{k}(s)\right]ds,\\
 d_j(t)&=d_{0j}+\int_{0}^{t}\left[\sum_{k=1}^{\infty}a_{j+k,k}d_{j+k}(s)d_{k}(s)- 
 \sum_{k=1}^{\infty}a_{j,k}d_{j}(s)d_{k}(s)\right]ds.
\end{align*}
and defining \(x(t)=c(t)-d(t)\) we perform the same estimates as in the proof of Proposition~\ref{uniqueness} to obtain, this time, 
\begin{multline*}
	\sum_{j=1}^\infty j^\alpha |x_{j}(t)| \leqs \sum_{j=1}^\infty j^\alpha |x_{j}(0)|+4K\|c_{0}\|_{1}\int_{0}^{t} \sum_{j=1}^{\infty}j^\beta |x_{j}|+\\+
	K\|c_{0}\|_{1}\int_{0}^{t}\sum_{j=n+1}^{\infty}j^\beta c_{j}+K\|d_{0}\|_{1}\int_{0}^{t}\sum_{j=n+1}^{\infty}j^\beta d_{j}\,,
\end{multline*}
instead of \eqref{forgron}. Hence, by making \(n\to\infty\) and using the same arguments as in that proof, we obtain,
\[
\sum_{j=1}^\infty j^\alpha |x_{j}(t)| \leqs \sum_{j=1}^\infty j^\alpha |x_{j}(0)| + 4K\|c_{0}\|_{1}\int_{0}^{t} \sum_{j=1}^{\infty}j^\alpha |x_{j}|\,.
\]
By using the Gronwall lemma estimate \eqref{contalpha} follows.
\end{Proof}
\begin{remark}
Here we recall the Remark~\ref{rembeta} with respect to our restricting hypothesis on the kinetic coefficents and on the growing rate of \((g_{j})\)\,.
Notice that we were not able to prove continuous dependence on the initial conditions with respect to the \(X_{1}\) norm, except in the case where 
\(a_{j,k}\leqs K\), for all \(j,k\in\Nb \). At the moment it is not clear to us whether this is an essential feature of the RBK equation or if 
it is just a technical limitation due to the methods we have used. It is possible that, in a more general case, 
instead of a continuity property based on a norm estimate like 
\eqref{contalpha}, we can prove an upper semicontinuity property for admissible solutions similar to that in \cite[Theorem 5.4]{BC}.
\end{remark}
%======================================================================================
%                                                                           Invariance properties
%======================================================================================

\section{Some invariance properties of solutions}\label{invariance}

From the physical process that we are modelling (see scheme in Fig.~\ref{cluster_eating}) it is natural to expect that if initially
 there are no clusters of size larger than $p$, then none will be produced afterward. This is established next.

\begin{prop}\label{prop5-1}
Assume the Cauchy problems for (\ref{rbk}) have unique solutions.
 Then, for every $p\in \Nb,$ the sets 
\[
 X^{\leqs p} := \{c\in X^+_1|\;  c_j=0,\,\forall j>p\}
\]
are positively invariant for (\ref{rbk}). 
\end{prop}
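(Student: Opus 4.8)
The plan is to reduce the infinite system to the finite one and then invoke the uniqueness hypothesis. Fix $p\in\Nb$ and take $c_0\in X^{\leqs p}$, so $c_{0j}=0$ for all $j>p$. First I would consider the finite $p$-dimensional system \eqref{frbk} with $N=p$ and initial data $(c_{01},\dots,c_{0p})$; by Proposition~\ref{prop2-1} it has a unique solution $c^p=(c^p_j(\cdot))$, which is nonnegative by part~(ii) and globally defined on $[0,+\infty)$ by part~(iii).

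Next I would set $\tilde c:=(c^p_1,\dots,c^p_p,0,0,\dots)$ and verify that $\tilde c$ is a mild solution of \eqref{rbk} on $[0,+\infty)$ with $\tilde c(0)=c_0$ --- this is the observation already noted right after Proposition~\ref{prop2-1}. Condition~(i) of Definition~\ref{defsol} is immediate, since only finitely many components are nonzero, each $c^p_j$ is continuous, and $\|\tilde c(t)\|_1\leqs\|c_0\|_1$ because the density of \eqref{frbk} is nonincreasing; condition~(ii) holds because $\sum_k a_{j,k}\tilde c_k$ has at most $p$ nonzero terms. For condition~(iii), when $j\leqs p$ the series in \eqref{rbk} truncate exactly to the sums in \eqref{frbk} with $N=p$, since $\tilde c_{j+k}=0$ once $j+k>p$ and $\tilde c_k=0$ once $k>p$; when $j>p$ both series vanish identically (the gain term because $j+k>p$, the loss term because $\tilde c_j=0$), consistently with $\tilde c_j\equiv0$. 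Hence $\tilde c$ solves \eqref{rbk}.

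Finally, by the standing hypothesis the Cauchy problem for \eqref{rbk} with datum $c_0$ has a unique solution, so it must coincide with $\tilde c$; since $\tilde c_j(t)=0$ for all $j>p$ and all $t\geqs0$ and $\tilde c(t)\in X_1^+$, we obtain $\tilde c(t)\in X^{\leqs p}$ for every $t\geqs0$, which is exactly the claimed positive invariance. There is no genuinely difficult step here: the only point requiring a little care is the truncation argument in condition~(iii), i.e.\ confirming that for compactly supported data the infinite sums in \eqref{rbk} collapse to those of \eqref{frbk}, so that the zero-extension of $c^p$ is an honest solution in the sense of Definition~\ref{defsol}; everything else is a direct application of Proposition~\ref{prop2-1} together with the uniqueness assumption.
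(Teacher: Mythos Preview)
Your argument is correct and follows exactly the same route as the paper: solve the finite $p$-dimensional system \eqref{frbk}, extend by zeros to get a solution of \eqref{rbk}, and invoke uniqueness. You have in fact spelled out more carefully than the paper the verification of conditions (i)--(iii) in Definition~\ref{defsol}; the only minor difference is that the paper phrases the argument starting from an arbitrary time $\tau\geqs 0$ rather than $\tau=0$, but this is a cosmetic point.
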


\begin{proof}
Let $c$ be a solution of (\ref{rbk}) such that $c(\tau) = c_0\in X^{\leqs p}$, for some $\tau\geqs 0.$

Let $c^p(\cdot)$ be the unique solution of the $p$-dimensional Cauchy problem
\[
\begin{array}{l}
 \dot{c}_j^p = \displaystyle{\sum_{k=1}^{p-j}W_{j+k,k}(c^p) - \sum_{k=1}^{p}W_{j,k}(c^p)} \\
 c_j^p(\tau)\rule{0mm}{4mm} = c_{0\,j},
\end{array}
\]
for $j = 1,\ldots, p$ (with the first sum defined to be zero if $j=p.$) 
Then, the function $(c_1^p, c_2^p, \ldots, c_p^p, 0, 0, \ldots)$ is a solution of the infinite dimensional system (\ref{rbk}) and,
by uniqueness, it must be the solution $c$. Therefore, for all $t\geqs\tau$, we have $c_j(t)=0$ when $j=p+1, p+2, \ldots$,
that is,  $c(t)\in X^{\leqs p}$ for all $t\geqs\tau,$ which proves the result.
\end{proof}

This invariance property also occur in {\it fragmentation\/} equations: 
if the initial
distribution of clusters has no clusters with size larger than $p$, then they cannot be produced by fragmentation of those 
(smaller) ones that are
initially present; a reasonable enough result. Invariant sets for coagulation equations
with Smoluchowski coagulation processes 
are of a different kind but one can also characterize them without much difficulty \cite{dC}.
In fact, we can use a similar approach also in this case to characterize the positivity
properties of the cluster distribution (i.e., the subscripts $j$ for which $c_j(t)>0$)
in terms of those same properties for the initial data. Let us first introduce some notation.

For a solution $c=(c_j)$ to (\ref{rbk}), denote by $P := \{j\in\Nb\,|\, c_j(0)>0\}$ the set 
of integers (finite or infinite) describing the positive components of the initial condition $\bigl(c_j(0)\bigr),$
and let $\gcd(P)$ be the greatest common divisor of the elements of $P$.
Define $\mathcal{J}(t):= \{j\in\Nb\,|\,c_j(t)>0 \},$ the set of indices for which the component of the solution is
positive at the instant $t.$ Naturally, $P=\mathcal{J}(0).$ Now we have the following result: 

\begin{prop}\label{prop5-2}
Assume uniqueness of solution to initial value problems for (\ref{rbk}) holds.
Let $\#P>1$. Then, 
\[
 \gcd(P)=m \Longrightarrow \mathcal{J}(t) = m\Nb\cap [1, \sup P],\quad \forall t>0.
\]
\end{prop}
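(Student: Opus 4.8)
The plan is to characterize $\mathcal{J}(t)$ by combining two inclusions. First, since $\#P>1$, let $p_0 := \sup P$; if $p_0 = +\infty$ the claimed set is simply $m\Nb$, and the argument below goes through with the obvious modifications, so I will treat the finite case and indicate the change. By Proposition~\ref{prop5-1} (which applies since we are assuming uniqueness), the set $X^{\leqs p_0}$ is positively invariant when $p_0<\infty$, so $\mathcal{J}(t)\subset [1,p_0]$ for all $t$. Next I would show $\mathcal{J}(t)\subset m\Nb$: writing $c$ as the uniform limit of finite-dimensional truncations $c^N$ (admissibility, guaranteed here via Proposition~\ref{prop2-1} and the existence theory), it suffices to observe that the set $\{x\in\Rb^{N^+}\mid x_j=0\ \forall j\notin m\Nb\}$ is invariant for (\ref{frbk}): if $c_j=0$ whenever $m\nmid j$, then in the gain term $\sum_k W_{j+k,k}(c)$ for an index $j$ with $m\nmid j$, every summand has either $m\nmid k$ or $m\nmid j+k$, hence vanishes, so $\dot c_j=0$ there. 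Passing to the limit gives $c_j(t)=0$ for $m\nmid j$, i.e. $\mathcal{J}(t)\subset m\Nb\cap[1,\sup P]$.

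The substantive half is the reverse inclusion: for every $t>0$ and every $j\in m\Nb\cap[1,\sup P]$ we must have $c_j(t)>0$. The key structural input is a positivity-propagation lemma at the level of the finite system: if at some time $t_1$ we have $c_a(t_1)>0$ and $c_b(t_1)>0$ with $a\neq b$, then for all $t>t_1$ we have $c_{|a-b|}(t)>0$ (when $|a-b|\geqs 1$). This follows from the integral form in Definition~\ref{defsol}(iii): the derivative of $c_{|a-b|}$ picks up the nonnegative gain term $a_{\max(a,b),\min(a,b)}c_a c_b$, which is strictly positive at $t_1$ by continuity on a small interval, while the loss term is linear in $c_{|a-b|}$ and so, by a Gronwall/integrating-factor argument exactly as in the proof of Proposition~\ref{prop2-1}(ii), cannot drive $c_{|a-b|}$ back to zero. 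One also needs the trivial persistence fact that if $c_a(t_1)>0$ then $c_a(t)>0$ for all $t>t_1$ (same integrating-factor argument, loss term linear in $c_a$, gain term nonnegative), and a companion statement producing $c_{a+b}$-type indices is \emph{not} available here — only differences — which is why the gcd, rather than all of $[1,\sup P]$, is the relevant set.

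With these tools the reverse inclusion is a number-theoretic induction. From the persistence fact, $c_j(t)>0$ for all $j\in P$ and all $t>0$. Applying the difference-propagation lemma to pairs of elements of $P$, and then iterating — at each stage we may take differences of indices already known to be positive at any earlier positive time, using that for any finite set of such indices there is a common sub-interval on which all are simultaneously positive — we generate, by an argument mirroring the classical proof that the set of $\Zb$-combinations of $P$ equals $\gcd(P)\Zb$ (Bézout), every sufficiently large multiple of $m$ that is $\leqs \sup P$. The finitely many small multiples of $m$ in $[1,\sup P]$ that are not directly reached as a chain of differences still arise: because $\sup P\in P$ (or, in the infinite case, because arbitrarily large elements of $P$ exist) one can realize each such $jm$ as $|q - q'|$ for suitable already-positive indices $q,q'$ with $q-q' = jm$, e.g. by first producing a positive index congruent to $0$ mod $m$ that is large enough and then subtracting down in steps of elements of $P$. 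Hence $c_{jm}(t)>0$ for every $jm\in m\Nb\cap[1,\sup P]$ and every $t>0$, which together with the first half yields $\mathcal{J}(t)=m\Nb\cap[1,\sup P]$ for all $t>0$.

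I expect the main obstacle to be the bookkeeping in this last step: making the "iterate differences" argument precise requires care because the propagation lemma only tells us $c_{|a-b|}(t)>0$ for $t$ \emph{after} the time at which both $c_a,c_b$ are positive, so one must argue on nested time intervals (or simply note that once an index is positive at some $t_0>0$ it is positive on all of $(t_0,\infty)$, and every index in question becomes positive at some such $t_0$, with all the finitely many relevant indices eventually simultaneously positive on a common interval ending at any prescribed $t$). The translation of "closure under differences generates $\gcd(P)\Zb$" into "closure under $|a-b|$ within $[1,\sup P]$ generates $m\Nb\cap[1,\sup P]$" is elementary but needs the observation that $\sup P$ (or large elements of $P$) provides enough room to stay inside the window while performing the Euclidean-algorithm steps; this is where the hypothesis $\#P>1$ and the explicit upper cutoff $\sup P$ are used.
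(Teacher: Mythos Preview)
Your argument for the inclusion $\mathcal{J}(t)\supseteq m\Nb\cap[1,\sup P]$ is essentially the paper's: both rest on the integrating-factor/variation-of-constants identity
\[
c_j(t)E_j(t)=c_j(\tau)E_j(\tau)+\int_\tau^t E_j(s)R_j(s)\,ds,
\]
from which persistence (your ``once positive, always positive'') and difference propagation (your lemma producing $c_{|a-b|}$) drop out, followed by a B\'ezout/Euclidean-algorithm iteration.  The paper packages the iteration as a single claim (its item~(iii)), while you spell out the nested-time-interval bookkeeping more carefully; substantively these are the same.

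Where you diverge is in the inclusion $\mathcal{J}(t)\subseteq m\Nb\cap[1,\sup P]$.  The paper does \emph{not} go through admissibility or truncations.  Instead it directly builds an auxiliary function $d=(d_j)$ with $d_j\equiv 0$ for $j\notin m\Nb\cap[1,\sup P]$ and $d_j$ solving the restricted finite system otherwise, checks that this $d$ is a solution of the full system \eqref{rbk} with the same initial data, and invokes the \emph{uniqueness} hypothesis to conclude $c=d$.  This uses only what is assumed in the statement.  Your route---invariance for \eqref{frbk} plus ``passing to the limit''---needs the solution to be admissible.  When $\sup P<\infty$ that is harmless (by Proposition~\ref{prop5-1} the solution literally is the finite-dimensional one), but when $\sup P=\infty$ you are importing a hypothesis not present in the proposition; uniqueness alone does not give you that $c$ arises as a limit of truncations.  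The paper's auxiliary-solution device sidesteps this and is what you should use for the infinite-support case as well (define $d$ on the subsystem indexed by $m\Nb$, extend by zeros, verify it solves \eqref{rbk}, apply uniqueness).
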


\begin{proof}
We first remark that, by uniqueness of solution and the form of the system (\ref{rbk}), if, for any $j$, one has 
$c_j(s)=0$ for all $s$ in a nondegenerate interval $[\t, t]$, then $c_j$ is identically zero for all times.

In order to prove the proposition it is convenient to write system (\ref{rbk}) in a different form, similar to what was done 
for the Smoluchowski's coagulation equation in \cite{dC}. Define
\[
 R_j(t):= \sum_{k=1}^{\infty}a_{j+k,k}c_{j+k}(t)c_k(t), \;\, \varphi_j(t):=\sum_{k=1}^{\infty}a_{j,k}c_k(t), 
\;\, E_j(t):=\exp\biggl(\int_0^t\!\!\varphi_j(s)ds\biggr).
\]
Observe that all these functions are nonnegative, for nonnegative solutions and for all $t$ (and furthermore $E_j(t)\geqs 1$). 
Using them, write (\ref{rbk}) as
\[
 \dot{c}_j = R_j - c_j\varphi_j
\]
and apply the variation of constants formula to get, for all $t\geqs \t\geqs 0,$
\begin{equation}
 c_j(t)E_j(t) = c_j(\t)E_j(\t) + \int_{\t}^t\!E_j(s)R_j(s)ds. \label{fvc}
\end{equation}
Equation (\ref{fvc}) allows the following conclusions to be immediately drawn:
\begin{description}
\item[{\rm (i)}] if $c_j(\t)>0$, then $c_j(t)>0,$ for all $t>\t$, or, equivalently, $\mathcal{J}(t)\supseteq \mathcal{J}(\t)$
for all $t>\t,$ and in particular $\mathcal{J}(t)\supseteq P,$ for all $t>0.$
\item[{\rm (ii)}] due to the definition of $R_j(\cdot)$, if $\ell_2>\ell_1$ are two numbers in $\mathcal{J}(\t)$, then $\ell_2-\ell_1\in
\mathcal{J}(t)$, for all $t>\t.$
\item[{\rm (iii)}] by (i) and (ii) one concludes that, if $p_1, \ldots, p_n \in \mathcal{J}(\t),$ 
(assuming, without loss of generality, 
that $p_i<p_j$ for $i<j$), then, for all integers $m_i\in\Zb$, 
we have $m_1p_1 + \ldots +m_np_n \in \mathcal{J}(t)$, for $t>\t$, provided the integers $m_i$ are such that 
$1\leqs m_1p_1 + \ldots +m_np_n \leqs p_n.$
\end{description}

Suppose that $1 < \#P< \infty$. Let us write $P=\{p_1, \ldots, p_n\}$. By B\'ezout's lemma in
elementary number theory \cite[Chapter 1]{S} we conclude that if $\gcd(P) =m$ then the smallest positive value 
of  $m_1p_1 + \ldots +m_np_n$ is $m$ and all other
larger values are multiples of $m.$ Hence, this result, (iii), and Proposition~\ref{prop5-1} imply 
that $\mathcal{J}(t) \supseteq m\Nb\cap [1, \sup P],$ for all $t>0,$ when the initial data is finitely supported.
It is clear that, if $m=1,$ then equality holds and the proof is complete for finitely supported initial data.

In order to complete the proof we now need to prove that, when $m>1,$ we also have $\mathcal{J}(t) \subseteq m\Nb\cap [1, \sup P].$ 

We first note that $\gcd(P)=m\Rightarrow P\subseteq m\Nb\cap [1, \sup P].$ This is obvious since the assumption implies that every
element of $P$ is a multiple of $m$ (hence an element of $m\Nb$) and, naturally, it is not bigger than the supremum of $P$.

Let us now prove the result. Note that it suffices to prove that, for any $q\in\Nb,$
\[
 q\not\in m\Nb\cap [1, \sup P] \Rightarrow c_q(t)=0, \forall t>0.
\]
Let $d=(d_j),$ $j\in\Nb,$ be defined by $d_j=0$ if $j>\sup P,$ and, for $j\leqs \sup P,$ let $d_j$ be given by the solution of the 
ordinary differential equation
\[
 \begin{cases}
  \dot{d}_j = 0  &\text{ if $j\not\in m\Nb\cap [1, \sup P]$,}\\
  \dot{d}_j = \displaystyle{\sum_{k=1}^{p-j}a_{j+k,k}d_{j+k}d_k- d_j\sum_{k=1}^{\infty}a_{j,k}d_k} &\text{ otherwise,}
 \end{cases}
\]
where $p:=\sup P$, with initial condition 
\[
 d_j(0)=c_j(0),\quad, j=1, \ldots, p.
\]

Let $j\not\in m\Nb\cap[1, \sup P].$ We know that $j\not\in P$ and therefore $d_j(0)=c_j(0)=0$. Thus, by the differential 
equation, $d_j(t)=0,$ for all $t\geqs 0.$ On the other hand, if $j\not\in m\Nb\cap [1, \sup P]$ it is not possible that
both $k$ and $j+k$ belong to $m\Nb\cap [1, \sup P],$ for every $k.$ Thus, we conclude that
\[
 \sum_{k=1}^{p-j}a_{j+k,k}d_{j+k}d_k =0.
\]
Moreover, since $d_j=0$ for all $t$, we also have
\[
 d_j\sum_{k=1}^pa_{j,k}d_k = 0,
\]
and hence $d$ is also solution of the system. 
\[
 \dot{d}_j = \sum_{k=1}^{p-j}a_{j+k,k}d_{j+k}d_k -d_j\sum_{k=1}^pa_{j,k}d_k,\quad j=1, \ldots, p,
\]
with initial condition $d_j(0)=c_j(0).$
Therefore, by uniqueness, $c=d$ and we conclude that $q\not\in m\Nb\cap [1, \sup P] \Rightarrow c_q(t)=d_q(t)=0,\; \forall t\geqs 0.$

Suppose now that $\#P=\infty$. Let $\gcd(P)=m$. By \cite[Proposition 5]{dC2} 
there exists a finite subset $P_n\subset P,$ with $\#P_n >1,$ 
such that $\gcd(P_n)=m.$ Apply now the above argument to $P_n$ instead of $P$. This concludes the proof.  
\end{proof}

The previous proof does not hold if $\#P=1.$ In that case, a peculiar behaviour occurs, not exhibited either by the
usual Smoluchowski's equations, or by the fragmentation equations.

\begin{prop}\label{prop5-3}
Assume uniqueness of solutions to initial value problems for (\ref{rbk}) holds.
If a solution to (\ref{rbk}) starts monodisperse, it stays monodisperse for all later times, or, 
in the notation used above, \[\#P=1 \Rightarrow \mathcal{J}(t) = P,\quad \forall t>0.\]
\end{prop}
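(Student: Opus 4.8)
The plan is to exhibit, under the hypothesis $\#P=1$, an explicit solution whose support never grows, and then invoke the assumed uniqueness to conclude that it must be \emph{the} solution.

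First I would write $P=\{p\}$, so that the initial datum $c_0=(c_{0j})$ satisfies $c_{0p}>0$ and $c_{0j}=0$ for every $j\neq p$. Next I would construct a candidate solution $d=(d_j)$ supported on $\{p\}$: set $d_j(t)\equiv 0$ for all $j\neq p$, and let $d_p$ solve the scalar Riccati equation $\dot d_p=-a_{p,p}d_p^2$ with $d_p(0)=c_{0p}$. This equation has the explicit global solution $d_p(t)=c_{0p}/(1+a_{p,p}c_{0p}t)$ (interpreted as the constant $c_{0p}$ when $a_{p,p}=0$), which is strictly positive for all $t\geqs 0$.

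Then I would check that $d$ is indeed a mild solution of (\ref{rbk}) with $d(0)=c_0$. For a fixed $j$, a gain term $a_{j+k,k}d_{j+k}(s)d_k(s)$ can be nonzero only if both $j+k=p$ and $k=p$, which is impossible for $j,k\in\Nb$; hence all gain terms vanish, including those in the equation for $j=p$ (this is precisely the "peculiarity": two $p$-clusters would have to produce a nonexistent $0$-cluster). A loss term $a_{j,k}d_j(s)d_k(s)$ vanishes for $j\neq p$ because $d_j\equiv 0$, and for $j=p$ it reduces to $a_{p,p}d_p^2$; so the integral equation of Definition~\ref{defsol}(iii) holds for every $j$ by construction. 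The remaining requirements of Definition~\ref{defsol} are immediate because $d$ has finite support: each $d_j$ is continuous, $\sup_t\|d(t)\|_1<\infty$, and condition (ii) holds since $\sum_k a_{j,k}d_k(s)=a_{j,p}d_p(s)$ is bounded on compact time intervals. Alternatively, and even more directly, one may observe that $(d_1,\dots,d_p)$ solves the $p$-dimensional truncated system (\ref{frbk}) with $N=p$, and then appeal to the fact, already used in the proof of Proposition~\ref{prop5-1}, that such a solution padded with zeros solves (\ref{rbk}).

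Finally, invoking the assumed uniqueness of solutions of the Cauchy problem, we get $c=d$, so $c_j(t)=0$ for all $j\neq p$ and $c_p(t)=d_p(t)>0$ for all $t$ (the positivity of $c_p$ also follows from conclusion (i) drawn from the variation-of-constants formula (\ref{fvc})); that is, $\mathcal{J}(t)=\{p\}=P$ for every $t>0$, indeed for every $t\geqs 0$. There is no genuine obstacle here: this is a routine uniqueness-plus-explicit-solution argument, and the only point needing a line of care is the verification that the compactly supported candidate meets Definition~\ref{defsol}, which is trivial precisely because its support is finite.
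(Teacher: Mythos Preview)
Your proof is correct and follows essentially the same approach as the paper: construct the explicit monodisperse candidate $d_j(t)=\bigl(c_{0p}/(1+a_{p,p}c_{0p}t)\bigr)\delta_{j,p}$, verify that it solves (\ref{rbk}) with the given initial data, and conclude by uniqueness. The only cosmetic difference is that the paper first invokes Proposition~\ref{prop5-1} to reduce to the $p$-dimensional truncation before writing down the ansatz, whereas you verify the mild-solution conditions directly (and mention the truncation route as an alternative).
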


\begin{proof}
 Let $c_j(0)=\lambda \delta_{j,p}, \lambda >0$, for some positive integer $p$. Then, $c(0)\in X^{\leqs p}$ and, by 
 Proposition~\ref{prop5-1}, 
$c(t)\in X^{\leqs p}$, for all $t\geqs 0.$ Therefore $c_j(t)=0$ for all $j\geqs p+1$ and, for $j=1, \ldots, p$,  $c_j(t)$ 
is given by the 
$p$-dimensional system considered above. Now let $c_j(t)=\alpha(t)\delta_{j,p}$. Obviously we have, for all $j=1,\ldots, p,$
\[
\displaystyle{\sum_{k=1}^{p-j}W_{j+k,k}(c)} = 0, 
\qquad\qquad \displaystyle{\sum_{k=1}^{p}W_{j,k}(c)} = a_{j,p}\alpha(t)^2\delta_{j,p}.
\]
Thus, $c(\cdot)$ will be a solution of (\ref{rbk}) with initial condition $\bigl(\lambda\delta_{j,p}\bigr)$ 
if and only if $\alpha(\cdot)$ solves
\[
 \begin{cases}
  \dot{\alpha} = -a_{p,p}\alpha^2 &\\
 \alpha(0)= \lambda.&
 \end{cases}
\]

Hence, solving this initial value problem and substituting back into the expression for $c(\cdot)$ we obtain the following solution
of (\ref{rbk}) 
\[
 \begin{cases}
  c_p(t) = \displaystyle{\frac{\lambda}{1+\lambda a_{p,p}t}} & \\
 c_j(0) = 0\;\,\text{for $j\neq p$}.&
 \end{cases}
\]

By uniqueness, it is the only solution satisfying the initial condition $c_j(0)=\lambda \delta_{j,p},$ which proves the result.
\end{proof}

%=============================================================================
%                                                          Long-time behaviour
%=============================================================================

\section{On the long-time behaviour of solutions}

In this section we start the investigation of the long-time behaviour of solutions.

Having present the physical process under consideration (see scheme in Fig.~\ref{cluster_eating}), it is natural to
expect that, under rather mild conditions, all solutions will converge pointwise to zero as $t\to\infty.$ For 
solutions in $X_1$ this can be phrased by saying that solutions converge to zero in the weak-$\ast$ sense \cite[page 672]{BCP}.

The result is given in the following proposition, the proof of which is rather easy and follows the same ideas used to prove
the same result in the {\it fragmentation\/} equation \cite[Theorem 4.1]{CdC}, which we reproduce here for the sake of completeness
of presentation.

\begin{prop}\label{prop6-1}
Let $a_{j,k}\leqs Kjk,$ and let $c$ be a solution of (\ref{rbk}) with initial condition $c(t_0)=c_0\in X^+_1$, defined on
$[t_0,\infty).$ Assume that $a_{j,j}>0,$ for all $j$. 
Then, for all $j\in \Nb$ it holds that $c_j(t)\to 0$ as $t\to\infty.$
\end{prop}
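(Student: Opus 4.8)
The plan is to show that the total density $\|c(t)\|_1=\sum_j jc_j(t)$ is nonincreasing and hence has a limit as $t\to\infty$, and then to use the dissipation identity to force each $c_j$ to zero. First I would invoke the Corollary following Lemma~\ref{momlem}: since $a_{j,k}\leqs Kjk$, the solution is density nonincreasing, so $t\mapsto\|c(t)\|_1$ is monotone nonincreasing and bounded below by $0$, hence converges to some limit $L\geqs 0$ as $t\to\infty$. In particular the quantity $\|c(t_0)\|_1-\|c(t)\|_1$ converges, so by the integral form of the density equation (equation \eqref{jwjj} with appropriate care, or more simply the $g_j\equiv j$ case of \eqref{weakmoment} when $\beta\leqs 1/2$; in the general case $a_{j,k}\leqs Kjk$ one argues directly from the finite truncations as in the proof of Lemma~\ref{momlem}), the ``entropy'' integral
\[
\int_{t_0}^{\infty}\biggl[\,2\sum_{j=1}^{\infty}\sum_{k=j+1}^{\infty}jW_{j,k}(c(s))+\sum_{j=1}^{\infty}jW_{j,j}(c(s))\biggr]ds<\infty.
\]
Since the integrand is nonnegative and, in particular, dominates $a_{j,j}c_j(s)^2$ for each fixed $j$ (up to the positive constant coming from the $jW_{j,j}$ term), we get $\int_{t_0}^\infty a_{j,j}c_j(s)^2\,ds<\infty$ for every $j$, and since $a_{j,j}>0$ this gives $\int_{t_0}^\infty c_j(s)^2\,ds<\infty$.

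Next I would upgrade this integral decay to pointwise decay. The function $c_j(\cdot)$ is Lipschitz on $[t_0,\infty)$ with a uniform constant: from \eqref{rbk} and the bound $a_{j,k}\leqs Kjk$ together with $\sup_s\|c(s)\|_1\leqs\|c_0\|_1$, one has $|\dot c_j(s)|\leqs 2Kj\|c_0\|_1^2$ for a.e.\ $s$, so $c_j$ is uniformly continuous. A standard argument then shows that a nonnegative uniformly continuous function whose square is integrable over $[t_0,\infty)$ must tend to $0$: if not, there is $\ve>0$ and a sequence $s_n\to\infty$ with $c_j(s_n)\geqs\ve$, and by uniform continuity $c_j\geqs\ve/2$ on intervals of fixed length $\delta$ around each $s_n$ (which we may take disjoint), contradicting $\int_{t_0}^\infty c_j^2<\infty$. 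Hence $c_j(t)\to 0$ as $t\to\infty$ for each $j$.

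The one point requiring care, and the main obstacle, is justifying the dissipation identity
$\|c(t_0)\|_1-\|c(t)\|_1=\int_{t_0}^{t}[\,2\sum_{j}\sum_{k>j}jW_{j,k}+\sum_j jW_{j,j}\,]\,ds$
under only the hypothesis $a_{j,k}\leqs Kjk$, since the clean identity \eqref{jwjj} was proved under the stronger assumption $\beta\leqs 1/2$. For the weaker hypothesis I would instead run the argument at the level of the finite-dimensional truncations, exactly as in the proof of Lemma~\ref{momlem} and Proposition~\ref{prop2-1}(i): with $g_j\equiv j$ one has $\sum_{j=1}^N j\dot c_j^N=-\sum_{S_2(1,N)}jW_{j,k}^N-\sum_{T_2}\cdots$, giving a bona fide identity for $\|c^N\|_1$, and then pass to the limit using the uniform-convergence estimate \eqref{normp} / the bound $\sum_{j\geqs m}jc_j^N\leqs\sum_{j\geqs m}jc_{0j}$ to control the tails. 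Alternatively, and more simply, one does not even need the full identity: it suffices to observe that for each fixed $j$ the truncated density balance yields $c_j^N(t)+\int_{t_0}^t a_{j,j}(c_j^N)^2\,ds\leqs c_j^N(t_0)+\bigl(\text{terms that are controlled and, after }N\to\infty,\text{ telescope}\bigr)$, so that passing to the limit gives $\int_{t_0}^\infty a_{j,j}c_j^2\,ds\leqs\|c_0\|_1<\infty$ directly. Either route closes the argument.
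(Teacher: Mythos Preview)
Your overall strategy---extract a finite dissipation integral $\int_{t_0}^\infty a_{j,j}c_j^2\,ds<\infty$ and then upgrade to $c_j(t)\to 0$ via uniform continuity---is sound, but the justification of the dissipation estimate has a real gap. You correctly flag that \eqref{jwjj} is only proved under $a_{j,k}\leqs K(jk)^{1/2}$, and then propose to recover it by passing through the finite-dimensional truncations $c^N$ and the estimate \eqref{normp}. That route is not available here: the proposition concerns an arbitrary solution $c$, with no admissibility assumption, and under the mere hypothesis $a_{j,k}\leqs Kjk$ there is no uniqueness result guaranteeing that $c$ is a limit of such $c^N$. So neither of your two alternatives closes the argument as stated. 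The fix, however, is immediate and does not require any truncation: work with the \emph{zeroth} moment rather than the first. From the identity derived in the proof of Lemma~\ref{momlem} (valid for every solution under $a_{j,k}\leqs Kjk$),
\[
\sum_{j=1}^{\infty}c_j(\tau)-\sum_{j=1}^{\infty}c_j(t)=\int_{\tau}^{t}\sum_{S_2(1,\infty)}W_{j,k}\,ds\;\geqs\;\int_{\tau}^{t}a_{p,p}c_p(s)^2\,ds
\]
for every fixed $p$, since $(p,p)\in S_2(1,\infty)$ and all terms are nonnegative. Letting $t\to\infty$ gives $\int_{t_0}^\infty a_{p,p}c_p^2\,ds\leqs\|c_0\|_0<\infty$, and then your Lipschitz/uniform-continuity argument goes through unchanged. (Alternatively, one can stay with the first moment and use \eqref{limSSS} with $g_j=j$: the integrand is a limit of nonnegative sums each containing the diagonal term $jW_{j,j}$, so the same lower bound falls out without needing \eqref{jwjj} itself.)

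For comparison, the paper's proof is more elementary and avoids the Lipschitz step entirely. It uses \eqref{limSSS} with $g_j\equiv 1$ and arbitrary $m$ to see that each tail $\sum_{j\geqs m}c_j(t)$ is nonincreasing, hence convergent; subtracting consecutive tails shows that every $c_m(t)$ has a limit $c_m^\ast\geqs 0$. One then rules out $c_p^\ast>0$ by the same zeroth-moment identity applied on $[\tau,\tau+1]$: the left side tends to $0$ as $\tau\to\infty$, while the right side is $\geqs a_{p,p}\alpha_p^2>0$ for any $\alpha_p\in(0,c_p^\ast)$ once $\tau$ is large. The advantage of the paper's route is that convergence of $c_j(t)$ is established \emph{before} the contradiction, so no regularity of $c_j$ beyond continuity is needed; your route trades that structural step for a standard $L^2$-plus-uniform-continuity argument.
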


\begin{proof}
 Let $t> \tau,$ and consider the moments' equation (\ref{limSSS}) with $g_j\equiv 1,$ 
  \[
   \sum_{j=m}^{\infty}c_j(t) - \sum_{j=m}^{\infty}c_j(\tau) 
   = - \lim_{n\to\infty}\int_{\tau}^t\sum_{j=m}^{n}\sum_{k=j-m+1}^{\infty}a_{j,k}c_j(s)c_k(s)ds
\leqs 0,
  \]
from which we conclude that $t\mapsto \sum_{j=m}^{\infty}c_j(t)$ is a monotonic nonincreasing function. As it is bounded below (by zero) 
it must converge to some constant $p_m^{\ast}\geqs 0.$ Since $\sum_{j=m}^{\infty}c_j(t)\geqs \sum_{j=m+1}^{\infty}c_j(t)$ we have 
$p_m^{\ast}\geqs p_{m+1}^{\ast}.$ Then, for all $m\in\Nb$ we have
\[
 c_m(t) = \sum_{j=m}^{\infty}c_j(t)-\sum_{j=m+1}^{\infty}c_j(t) \xrightarrow[t\rightarrow \infty]{} p_m^{\ast}-p_{m+1}^{\ast} =: 
c_m^{\ast}\geqs 0.
\]
Now consider the case $m=1$ and $t=\tau + 1.$ Applying limits $\tau\to\infty$ to the moments' equation, we have
\[\renewcommand{\arraystretch}{2.0}
 \begin{array}{rcl}
  \displaystyle{\sum_{j=1}^{\infty}c_j(\tau+1) - \sum_{j=1}^{\infty}c_j(\tau)} &=& \displaystyle{
  - \lim_{n\to\infty}\int_{\tau}^t\sum_{j=1}^{n}\sum_{k=j}^{\infty}a_{j,k}c_j(s)c_k(s)ds}\\
& \Big\downarrow\rlap{$\scriptstyle\tau\to\infty$}  & \\
0 & = & \displaystyle{- \lim_{\tau\to\infty}\lim_{n\to\infty}\int_{\tau}^t\sum_{j=1}^{n}\sum_{k=j}^{\infty}a_{j,k}c_j(s)c_k(s)ds}.
 \end{array}
\]
Suppose there exists an integer $p\in\Nb$ such that $c_p^{\ast}>0.$ Let $\alpha_p\in (0, c_p^{\ast}).$ Then
\begin{eqnarray*}
 0 & = & \displaystyle{\lim_{\tau\to\infty}\lim_{n\to\infty}\int_{\tau}^{\tau +1}\sum_{j=1}^{n}\sum_{k=j}^{\infty}a_{j,k}c_j(s)c_k(s)ds}\\
& \geqs &  
 \displaystyle{\lim_{\tau\to\infty}\lim_{n\to\infty}\int_{\tau}^{\tau+1}\sum_{j=1}^{n}a_{j,j}\bigl(c_j(s)\bigr)^2ds}\\
& \geqs & \displaystyle{\lim_{\tau\to\infty}\int_{\tau}^{\tau+1}a_{p,p}\bigl(c_p(s)\bigr)^2ds}\;\;>\;\;
 \displaystyle{\lim_{\tau\to\infty}a_{p,p}\alpha_p^2} \; = \; a_{p,p}\alpha_p^2\;\;>\;\; 0,
\end{eqnarray*}
and this contradiction proves that $c_p^{\ast}=0,$ for all values of $p.$
\end{proof}

%===========================================================================
%                                                               Scaling behaviour
%===========================================================================

\section{On the scaling behaviour of solutions}
In this section we begin the study of the scaling behaviour of the solutions of (\ref{rbk}) in the particular case when 
$a_{j,k}=1, \text{for }j,k\in\Nb,$
in which case the system turns into
\begin{equation}\label{rbk1}
\dot{c}_{j}=\sum_{k=1}^{\infty}c_{j+k}c_{k}-c_{j}\sum_{k=1}^{\infty}c_{k}\,,\quad j=1,2,\dots
\end{equation}

This study is strongly motivated by similar studies on the scaling behaviour of coagulation-fragmentation equations and other 
related equations \cite{LM2004}.  Most of the  results in those works are consequences of the application of tools 
based on the Laplace transform. Here we have an entirely new situation since the production term (the first one on the r.h.s. 
of (\ref{rbk1})) is not of convolution type and hence Laplace transform methods are not useful.  

In the first place, we can draw consequences from the differential version of  (\ref{varc}) and (\ref{oddweak}) about the 
typical time scales for the cluster eating equation. Some of these conclusions were already formally obtained in \cite{RBK} and 
here we reproduce part of their arguments. In that work the authors were led to interesting novel features about 
the evolution of the numbers of clusters of odd and even sizes, which have no parallel in 
the usual Smoluchowski's coagulation-fragmentation equation.
These were already pointed out in Section~\ref{invariance}. Let us define, for $t\geq 0$,
\[
\nu(t):=\sum_{j=1}^{\infty}c_{j}(t),\qquad \nu_{\text{odd}}(t):=\sum_{j=1}^{\infty}c_{2j-1}(t)\,.
\] 
With our choice of the kinetic coefficients, equation (\ref{varc}) becomes
\begin{equation}\label{nudot}
\dot{\nu}=-\frac{1}{2}\nu^2-\frac{1}{2}\sum_{j=1}^{\infty}c_{j}^2\,,
\end{equation}
while equation (\ref{oddweak}) turns into
\begin{equation}\label{nuodddot}
\dot{\nu}_{\text{odd}}=-\nu_{\text{odd}}^2\,.
\end{equation}
Equation (\ref{nuodddot}) reflects the fact that the number of even size clusters does not affect the evolution of the number of odd size clusters. 
This is intuitively clear since the only interaction that changes the net amount of odd sized clusters is the reaction
between two odd size clusters, which produces 
an even size one. Solving (\ref{nuodddot}) we get
\[
\nu_{\text{odd}}(t)=\frac{\nu_{\text{odd}}(0)}{1+\nu_{\text{odd}}(0)t},\quad\text{for } t\geqs 0\,.
\]
On the other hand, (\ref{nudot}) implies that 
\[
-\nu^2\leqs\dot{\nu}\leqs -\frac{1}{2}\nu^2,
\]
and hence
\[
\frac{\nu(0)}{1+\nu(0)t}\leqs \nu(t) \leqs \frac{\nu(0)}{1+\frac{\nu(0)}{2}t}\,.
\]
These results give us a typical time scale of $t^{-1}$ which was already seen in the monodisperse solutions in section \ref{invariance}. From 
the above computations,
\[
\lim_{t\to+\infty} t\nu_{\text{odd}}(t)=
\begin{cases}
0,&\text{ if }\nu_{\text{odd}}(0)=0,\\
1,&\text{ if }\nu_{\text{odd}}(0)\not=0\,,
\end{cases}
\]
\[
\liminf_{t\to+\infty}t\nu(t)\geqs 1\,,\qquad \limsup_{t\to+\infty}t\nu(t)\leqs 2\,.
\]

We now turn our attention to the study of self-similar solutions. We call  a solution $c(\cdot)$ self-similar if there are 
functions $\zeta, \eta, \phi$ such that,
\begin{equation}\label{selfsim}
c_{j}(t)=\frac{1}{\zeta (t)}\phi\left(\frac{j}{\eta (t)}\right)\,,\quad\text{ for }j\in\Nb,\; t\geqs 0\,.
\end{equation}
We first remark that by virtue of the results obtained in Section~\ref{invariance} with respect to the monodisperse 
solutions (which are a trivial case of self-similar solutions), no universal behaviour is to be expected with respect to all 
the nonnegative solutions of (\ref{rbk1}). In other words, for $j$ and $t$ large, the solutions will not have an unique 
asymptotic self-similar behaviour independently of their initial conditions.

In order to find self-similar solutions we adopt an ansatz suggested in \cite{K} in a related situation: find differentiable 
functions $A$ and $\alpha$ such that there is a solution in the form 
\[
c_{j}(t)=A(t)\alpha(t)^j\,.
\]
By plugging this into (\ref{rbk1}) we obtain
\[
\dot{A}\alpha^j+jA\alpha^{j-1}\dot{\alpha}=-\frac{\alpha A^2}{1-\alpha^2}\alpha^j\,.
\] 
This clearly implies that $\alpha$ is a constant in $[0,1)$, while $A$ must satisfy
\[
\dot{A}=-\frac{\alpha}{1-\alpha^2}A^2\,.
\]
By integrating this ODE with initial condition $c_{j}(0)=A_{0}\alpha^j$, for $j\in\Nb$, with $A_{0}>0$, $\alpha\in  [0,1)$, 
 we obtain the solution
\begin{equation}\label{selfsimsol}
c_{j}(t)=\frac{A_{0}\alpha^j}{1+\beta t},\quad j\in\Nb, t\geqs 0\,,
\end{equation}
where $\beta:= \displaystyle\frac{A_{0}\alpha}{1-\alpha^2}$. This fits our definition of self-similar solution given by (\ref{selfsim}) with,
\[
\zeta (t) = 1+\beta t,\qquad \eta (t) = 1,\qquad \phi(x)=A_{0}\alpha^{x}\,.
\]
This confirms again the typical time scale of $t^{-1}$ and, in fact, for each such solution and for $j\in\Nb$,
\begin{equation}\label{asymself}
\lim_{t\to+\infty} tc_{j}(t) = (1-\alpha^2)\alpha^{j-1},
\end{equation}
and
\begin{equation}\label{asydens}
\lim_{t\to+\infty} t\nu(t)=1+\alpha\,.
\end{equation}
Therefore, we can consider (\ref{selfsimsol}) as a family of self-similar solutions, 
whose scaling behaviour depends only upon the rate $\alpha$ of exponential decreasing 
of the initial condition. For each $\alpha\in\left[0,1\right)$ there is one such 
solution with number of cluster density $\nu  (t)$ behaving like in (\ref{asydens}). 
At present the answers to the following questions about the asymptotic behaviour 
of the solutions of the cluster eating equation (\ref{rbk1}) are still unknown:
\begin{enumerate}
\item Besides (\ref{selfsimsol}) are there other strictly positive self-similar solutions? 
\item Are there other solutions behaving asymptotically like the solutions (\ref{selfsimsol}) 
in the sense that they satisfy (\ref{asymself})?
\item If the question above has positive answer and if we know that the total number of clusters 
has the behaviour (\ref{asydens}) for a known $\alpha$, can we guarantee that the solution itself 
behaves like (\ref{asymself}) or are there other types of asymptotic behaviour?
\end{enumerate}

We hope to return to these questions, as well as to a further exploration of the RBK system in the near future.

%====================================================================================
%                                                                          References
%====================================================================================

\bibliographystyle{amsplain}

\begin{thebibliography}{10}

\bibitem{BC} J.M. Ball, J. Carr, \textit{The discrete coagulation-fragmentation equations: existence, 
uniqueness, and density conservation}, 
J. Stat. Phys. \textbf{61} (1990) 1/2, 203--234.

\bibitem{BCP} J.M. Ball, J. Carr, O. Penrose, \textit{The Becker-D\"oring cluster equations: basic properties and asymptotic 
behaviour of solutions}, 
Commun. Math. Phys. \textbf{104} (1986) 657--692.

\bibitem{CdC} J. Carr, F.P. da Costa \textit{Asymptotic behaviour of solutions to the coagulation-fragmentation equations.
II. Weak fragmentation}, J. Stat. Phys. \textbf{77} (1994) 1/2, 89--123.

\bibitem{dC} F.P. da Costa \textit{On the positivity of solutions to the Smoluchowski equations}, Mathematika \textbf{42} 
(1995) 1/2, 406--412.

\bibitem{dC2} F.P. da Costa \textit{On the dynamic scaling behaviour of solutions to the discrete Smoluchowski equations}, 
Proc. Edinburgh Math. Soc. \textbf{39} (1996) 547--559.

\bibitem{CPRS} F.P. da Costa, J.T. Pinto, H.J. van Roessel, R. Sasportes, \textit{Scaling behaviour in 
a coagulation-annihilation model and Lotka-Volterra competition systems}, 
J. Phys. A: Math. Teor., \textbf{45} (2012) 285201.

\bibitem{war} I. Ispolatov, P.L. Krapivsky, S. Redner, \textit{War: The dynamics of vicious civilizations}, 
Phys. Rev. E \textbf{54} (1996) 1274-1289

\bibitem{K} P.L. Krapivsky, \textit{Nonuniversality and breakdown of scaling in two-species aggregation with annihilation}, 
 Physica A \textbf{198} (1993), 135--149.

\bibitem{L2002} Ph. Lauren\c{c}ot, \textit{The discrete coagulation equations with multiple fragmentation},
Proc. Edinburgh Math. Soc. \textbf{45} (2002) 67--82.

\bibitem{LM2004} Ph. Lauren\c{c}ot, S. Mischler, \textit{On coalescence 
equations and related models}; in: P. Degond, L. Pareschi, G. Russo 
(Eds.), \textit{Modelling and computational methods for kinetic 
equations}, Birkh\"auser, Boston, 2004, pp. 321--356.

\bibitem{RBK} S. Redner, D. Ben-Avraham, B. Kahng, \textit{Kinetics of `cluster eating'}, J. Phys. A: Math. Gen. 
\textbf{20} (1987), 1231--1238.

\bibitem{S} W. Sierpi\'nski, \textit{Elementary number theory}, Polska Akademia Nauk Monografie 
Matematyczne, Tom 46, Pa\'nstwowe Wydawnictwo Naukowe, Warszawa, 1964.

\bibitem{ZY} L. Zhang, Z.R. Yang,, \textit{A solvable aggregation-annihilation chain model with $n$ species}, 
 Physica A \textbf{237} (1997), 441--448.

\end{thebibliography}

\end{document}